\documentclass[a4paper, 12pt]{amsart}

\usepackage{amsmath,amsthm,amssymb}
\usepackage{enumerate}
\usepackage{lipsum}
\linespread{1.08} % Line spacing - Palatino needs more space between lines
\usepackage{microtype} % Slightly tweak font spacing for aesthetics
\usepackage[british]{babel}
%\usepackage[paperwidth=7in, paperheight=10in, marginparsep=.05in, margin=1.040in]{geometry}
%%%%%%%%%%%%%%%%%%%%%%%%%%%%%
\newtheorem{thm}{Theorem}[section]

\newtheorem{lem}[thm]{Lemma}

%%%
\theoremstyle{definition}

\theoremstyle{remark}

%%%
\numberwithin{equation}{section}
%%%

%\newcommand{\R}{\mathbf{R}}  % The real numbers.

%\DeclareMathOperator{\Re}{Re}

\begin{document}
\title[Solitary Waves for CNLS System with power nonlinearities]{
Stability of solitary-wave solutions of
                              coupled NLS equations with power-type nonlinearities}

\author[Santosh Bhattarai]{Santosh Bhattarai}
\address{Trocaire College,
360 Choate Ave, Buffalo, NY 14220 USA}
\email{sntbhattarai@gmail.com, bhattarais@trocaire.edu}

\thanks{\textit{Mathematics Subject Classification}. 35Q55 ; 35B35 ; 35J50 ; 76B25}
%\keywords {nonlinear Schr\"{o}dinger system ; solitary waves ; ground states ; stability}
\thanks{\textit{Keywords.} coupled Schr\"{o}dinger equations , power-type nonlinearities  , existence , stability, solitary waves , variational methods}
\begin{abstract}
This paper proves existence and stability results of solitary-wave solutions of a
system of 2-coupled nonlinear Schr\"{o}dinger equations
with power-type nonlinearities arising in several models of modern physics.
The existence of vector solitary-wave solutions (i.e, both components are nonzero)
is established via variational methods.
The set of minimizers is shown to be stable and further
information about the structures of this set are given.
The results extend stability results previously obtained by Cipolatti and Zumpichiatti \cite{[Cip]}, Nguyen and
Wang \cite{[Ngu],[Ngu4]}, and Ohta \cite{[Ohta]}.

\end{abstract}

\maketitle

%%%%%%%%%%%%%%%%%%%%%%%%%%%%%%%%%%%%%%%%%%%%%%%%%%%%%%%%%%%%%%%%%%%%%%
\section{Introduction}
%%%%%%%%%%%%%%%%%%%%%%%%%%%%%%%%%%%%%%%%%%%%%%%%%%%%%%%%%%%%%%%%%%%%%%
The nonlinear Schr\"{o}dinger (NLS) equation
\begin{equation}\label{nls}
iu_{t}+u_{xx}+|u|^{p-1}u=0,
\end{equation}
where $u$ is a complex-valued function of $(x,t)\in \mathbb{R}^{2},$
has been widely recognized as a universal mathematical model for
describing the evolution of a slowly varying wave packet in a general nonlinear wave system.
It plays an important role in a wide range of physical subjects such as
plasma physics \cite{[Dod]}
, nonlinear optics \cite{[Agr]}, hydrodynamics \cite{[Zak]}, magnetic systems \cite{[dem]}, to name a few.
The NLS equation has been also derived as the modulation equation for
wave packets in spatially periodic media such as photonic band gap materials and
Bose-Einstein condensates \cite{[Dal], [deS]}.

\smallskip

In certain physical situations, when there are two wavetrains moving with nearly the same group velocities, their interactions are then governed by the coupled NLS equations \cite{[Ros], [Yan]}.
For example, the coupled NLS systems appear in the study of
interactions of waves with different polarizations \cite{[Ber]}, the description of
nonlinear modulations of two monochromatic waves \cite{[New]}, the interaction of Bloch-wave packets in a periodic system \cite{[Shi]},
the evolution of two orthogonal pulse envelopes in birefringent optical fiber \cite{[Men]},
the evolution of two surface wave packets in deep water \cite{[Ros]}, to name a few.
The motivation for studying the coupled NLS systems also come from their applications in the Hartree-Fock theory for a double condensate, i.e., a binary mixture of Bose-Einstein condensates in two different hyperfine states \cite{[Esr]}.

\smallskip

 In this paper we consider
the following system of coupled 1-dimensional time-dependent nonlinear Schr\"{o}dinger equations:
\begin{equation}\label{CNLS}
\left\{
\begin{aligned}
iu_{t}+u_{xx}+(\alpha|u|^{p-2}+\tau|v|^{q}|u|^{q-2})u&=0 \\
iv_{t}+v_{xx}+(\beta |v|^{r-2}+\tau|u|^{q}|v|^{q-2})v&=0,
\end{aligned}
\right.
\end{equation}
where $u, v$ are complex-valued functions
of the real variables $x$ and $t,$ and the constants $\alpha, \beta, \tau$ are real.

\bigskip

The energy $H$ and the component mass $Q$ for the system \eqref{CNLS} are defined, respectively, as
\begin{equation}\label{energydef}
H(u,v)= |u_{x}|_{2}^{2}+|v_{x}|_{2}^{2}-\left(a|u|_{p}^{p}+b|v|_{r}^{r}+c|uv|_{q}^{q}\right),
\end{equation}
\begin{equation}
Q(u)=|u|_{2}^{2},
\end{equation}
and
\begin{equation}\label{Qdef}
Q(v)=|v|_{2}^{2},
\end{equation}
where $a=2\alpha/p,\ b=2\beta/r,$ and $c=2\tau/q.$ The conservation of these functionals is an important ingredient in our stability analysis.
(Here $|\cdot |_{p}$ denote the $L^{p}$ norm of complex-valued measurable functions on the line. For more details on our notation, see below.)

\smallskip

Solitary-wave solutions of
\eqref{CNLS} are, by definition,
solutions of the form
\begin{equation} \label{SO}
\begin{aligned}
&u(x,t)=e^{i(\omega _{1}-\sigma ^{2})t+i\sigma x +i\lambda_1}\Phi (x-2\sigma t),\\
&v(x,t)=e^{i(\omega _{2}-\sigma ^{2})t+i\sigma x + i\lambda_2}\Psi (x-2\sigma t),
\end{aligned}
\end{equation}
where $\omega_{1}, \omega_{2}, \sigma \in \mathbb{R},$ and $\Phi, \Psi:\mathbb{R}\to \mathbb{C}$ are functions of one variable whose values are small when $|\xi|=|x-2\sigma t|$ is large.
Notice that if we insert \eqref{SO} into \eqref{CNLS}, we see that $(\Phi, \Psi)$ solves the following system of ordinary differential equations
\begin{equation}\label{ODE}
\left\{
\begin{aligned}
-\Phi ^{\prime \prime }+\omega_{1}\Phi &=\alpha|\Phi|^{p-2}\Phi+\tau|\Psi|^{q}|\Phi|^{q-2}\Phi, \\
-\Psi ^{\prime \prime }+\omega_{2}\Psi &=\beta|\Psi|^{r-2}\Psi+\tau|\Phi|^{q}|\Psi|^{q-2}\Psi.
\end{aligned}
\right.
\end{equation}
The special case of \eqref{SO} when $\sigma=\lambda_1=\lambda_2=0,$ solutions of the form
\begin{equation}\label{SWave}
(u(x,t),v(x,t))=(e^{i\omega_{1}t}\Phi_{\omega_1}(x),e^{i\omega_{2}t}\Psi_{\omega_2}(x)),
\end{equation}
are usually referred
as standing-wave solutions. It is easy to see that $(u,v)$ as defined in \eqref{SWave} is a solution of \eqref{CNLS}
if and only if $(\Phi_{\omega_1},\Psi_{\omega_2})$ is a critical point for the functional $H(u,v),$
when $u$ and $v$ are varied subject to the constraints that $Q(u)$ and $Q(v)$ be held constant.
If $(\Phi_{\omega_1},\Psi_{\omega_2})$ is not only a
critical point, but in fact a global minimizer of the constrained variational problem for $H(u,v),$ then \eqref{SWave} is
called a ground-state solution of \eqref{CNLS}. In some cases, namely when $p=r=2q=4$ and
under certain conditions on $\alpha, \beta,$ and $\tau,$
it is
possible to show further that the ground-state solutions are solitary waves with the usual sech-profile (see, for example, \cite{[Ohta], [Ngu]}).

\smallskip

%%%%%%%%%%%%%%%%%%%%
Over the past ten years, the existence of nontrivial solutions of the elliptic system \eqref{ODE} has been investigated by
many authors using different methods. In the case of a positive coupling
parameter $\tau,$ Maia et al. \cite{[AM4]} studied the existence result for positive solutions of \eqref{ODE} using
constrained minimization methods. They proved the existence of vector ground states of \eqref{ODE} i.e.,
minimal action solutions $(\Phi,\Psi)$ with both $\Phi, \Psi$ nontrivial.
Moreover, the authors gave sufficient
conditions
for ground states to be positive in both components which basically require the coupling
parameter $\tau$ to be positive and sufficiently large.
Also, Ambrosetti and Colorado \cite{[AM1]} and de Figueiredo and Lopes \cite{[deF]} have proved
the additional sufficient
conditions for the existence of positive ground-state solutions in the special case $p=r=2q=4.$
Furthermore, for $p=r=2q=4$ and small positive values of $\tau,$ Lin and Wei \cite{[AM3]} and Sirakov \cite{[AM5]}
proved the existence
of positive solutions which have minimal energy among all fully nontrivial solutions.
In the repulsive case $\tau<0,$
Mandel \cite{[AM6]} recently
established existence and nonexistence results concerning fully nontrivial minimal energy solutions.
In all these papers, the analysis of their constrained minimization
problems does not establish the stability property of solutions.
In order to
study the stability questions, one has to tackle a different variational
formulation.

%%%%%%%%%%%%%%%%%%%%%

\smallskip

Our aim here is to prove the stability of vector solitary-wave solutions of the coupled
nonlinear Schr\"{o}dinger system \eqref{CNLS}. The extensive mathematical literature on the
subject of stability of solitary waves began with
the work Benjamin \cite{[Benj]} (see also Bona \cite{[Bona]}) for the KdV equation.
In subsequent works, many techniques have been developed to refine and extend
Benjamin's original conception in many ways to include numerous equations and systems
such as Benjamin-Ono equation, intermediate long wave equation, nonlinear
Schr\"{o}odinger equation, Boussinesq systems, etc.
For instance, Cazenave and Lions \cite{[CL]} developed a method to prove
existence and stability of solitary waves when they are minimizers of the energy
functional and when a compactness condition on minimizing sequences
holds. Using the concentration compactness principle of Lions \cite{[L]}, they proved that the solution of \eqref{nls} of the form
$e^{i\omega t}\Phi(x),\ \omega>0,$ and $\Phi(x)$ real and positive,
is stable if $p<5$ in the following sense,
for every $\epsilon >0$ there
exists $\delta>0$ such that if $u_0\in H^{1}(\mathbb{R})$ satisfies
$\|u_0-\Phi\|_{H^{1}(\mathbb{R})}<\delta,$ then
the solution $u(t)$ of \eqref{nls} with $u(0)=u_0$ exists for all $t$ and
\begin{equation*}
\sup_{t\in \mathbb{R}} \inf_{\theta \in \mathbb{R}}\inf_{y \in \mathbb{R}} \|u(t)-e^{i\theta}\Phi(\cdot - y)\|_{H^{1}(\mathbb{R})}<\epsilon.
\end{equation*}
On the other hand, it was shown that solution of the form $e^{i\omega t}\Phi(x)$ for the equation \eqref{nls}
is unstable for any $\omega>0$ if $p\geq 5$ (see Berestycki and Cazenave \cite{[Bere]} for $p>5,$ and Weinstein \cite{[Wei]} for $p=5).$
The Cazenave and Lions
method has since been adapted by different authors to prove existence and
stability results of a variety of nonlinear
dispersive equations (see, for example, \cite{[AAu], [AB11], [SB1], [Cip], [Ngu], [Ohta]}).

\smallskip

We now present a brief discussion of what is currently known about the stability of solitary-wave solutions for \eqref{CNLS}.
In the special case $p=r=2q=4, \alpha=\beta>-1,$ and $\tau=1,$ (also known as the symmetric case),
the coupled nonlinear Schr\"{o}odinger system \eqref{CNLS} is known to have explicit solitary-wave solutions of the form (see, for example, \cite{[New], [Wad]})
\begin{equation}\label{OhtaSO}
(u_\Omega,v_\Omega)=(e^{i(\Omega-\sigma ^{2})t+i\sigma x + i\lambda_1}\phi_\Omega (x-2\sigma t),
e^{i(\Omega-\sigma ^{2})t+i\sigma x +i\lambda_2}\phi_\Omega (x-2\sigma t)),
\end{equation}
where $\Omega>0, \sigma, \lambda_1,$ and $\lambda_2$ are real constants, and
\begin{equation}\label{SOform}
\phi_\Omega(x)=\sqrt{\frac{2\Omega}{\alpha +1}}\text{sech}(\sqrt{\Omega} x).
\end{equation}
This solution describes a 2-component solitary-wave solutions with
the components of equal amplitude. It corresponds to a straight line $\omega_1=\omega_2$
in the parameter plane $(\omega_1, \omega_2)$ of a general two-parameter family of solitary waves of \eqref{CNLS}.
For this particular form of solitary wave, stability was proved by
Ohta \cite{[Ohta]}.
In \cite{[Ngu]}, the stability result in \cite{[Ohta]} was extended to include a more general setting. Namely, when $p=r=2q=4,$ and $0< \tau < \min\{\alpha,\beta\};$ or $\tau > \max\{\alpha,\beta\}$ and $\tau^{2}> \alpha \beta,$ they proved the stability of solitary-wave solutions of the form
\begin{equation*}
\begin{aligned}
&u_\Omega(x,t)=e^{i(\Omega-\sigma ^{2})t+i\sigma x+i\lambda_1 }\sqrt{\frac{\tau-\beta}{\tau^2-\alpha\beta}}\phi_{\Omega} (x-2\sigma t),\\
&v_\Omega(x,t)=e^{i(\Omega -\sigma ^{2})t+i\sigma x+i\lambda_2}\sqrt{\frac{\tau-\alpha}{\tau^2-\alpha\beta}}\phi_{\Omega} (x-2\sigma t),
\end{aligned}
\end{equation*}
where $\Omega>0, \sigma, \lambda_1,$ and $\lambda_2$ are real constants, and $\phi_{\Omega}$ as defined in \eqref{SOform}.
In \cite{[Cip]} and \cite{[deF]}, the stability results were proved by considering different variational settings than the one used in \cite{[Ngu]}.
For example, in \cite{[Cip]}, the authors considered the variational problem of finding minimizers of $H$ subject to one constraint being
the sum of $L^{2}-$ norms of the two components. This variational problem can have different solitary-wave
solutions. In fact, the last two pages of \cite{[Cip]} show that in the case when
\begin{equation*}
\alpha=\beta=\sqrt{\frac{\tau-\beta}{\tau^2-\alpha\beta}} \ \text{and} \ \tau < \sqrt{\frac{\tau-\beta}{\tau^2-\alpha\beta}},
\end{equation*}
the solitary-waves which solve the variational problem in \cite{[Ngu]} are not the
same as the solitary waves which solve the variational problem in \cite{[Cip]}.

\smallskip

In all papers mentioned in the preceding paragraph, the stability results were proved by using variational methods
in which
constraint functionals
were not independently chosen.
It is not clear whether the sets of solitary waves obtained from these papers
constitute a true two-parameter family of disjoint sets.
To obtain a true two-parameter family of solitary waves, one has to
characterize solitary waves as minimizers of the energy
functional subject to two independent constraints.
In \cite{[AB11]}, the authors proved existence of a true two-parameter
family of solitary waves in the context of NLS-KdV system, improving the existence result obtained previously in \cite{[Dias]}.
Their method also lead to the stability property of solitary waves.
Recently, following the same arguments used in \cite{[AB11]}, Nguyen and Wang \cite{[Ngu4]}
proved the stability of a two-parameter family of solitary waves
for the NLS system \eqref{CNLS} in the special case $p=r=2q=4.$
Here we are able to
prove existence and stability of solitary-wave
solutions of \eqref{CNLS} for all $\alpha, \beta, \tau >0,$ and for the range $2 < p,r, 2q < 6.$
We will follow the arguments used in \cite{[AB11]} to solve
a constrained minimization problem.
This approach allows us to obtain existence and stability results concerning a true two-parameter family of
solitary waves with both component positive, i.e., each component is of the form
$e^{i\theta t} p(x)$ with $\theta \in \mathbb{R}$ and $p(x)$ a real-valued positive function in $H^1(\mathbb{R}).$

\smallskip

Logically, prior to a discussion of stability in terms of perturbations of the initial data
should be a theory for the initial-valued problem itself. This issue has been
studied in \cite{[Ngu3]} (see also \cite{[Ca]}). It is proved in \cite{[Ngu3]} that for the range
$2<p,r,2q<6,$ for any $(u(x,0),v(x,0))\in Y,$ there exists a unique solution $(u(x,t),v(x,t))$
of one-dimensional coupled NLS system \eqref{CNLS} in $C(\mathbb{R},Y)$ emanating
from $(u(x,0),v(x,0)),$ and $(u(x,t),v(x,t))$ satisfies
\begin{equation*}
Q(u(x,t))=Q(u(x,0)), \ \ Q(v(x,t))=Q(v(x,0)),
\end{equation*}
and
\begin{equation*}
H(u(x,t),v(x,t)) =H(u(x,0),v(x,0)).
\end{equation*}
However there are some restriction on the applicable range
of $p,r,q$ in higher dimension (See \cite{[Ngu3],[Ca]} for more details).

\smallskip

We now describe briefly our results. The existence of solitary waves is obtained by
studying constrained minimization problem and applying the concentration-compactness lemma of P.~L.~Lions \cite{[L]}.
More precisely, for $s>0$ and $t>0,$ we define
\begin{equation}
\Sigma_{s,t}=\left\{(f,g)\in Y: Q(f)=s, Q(g)=t \right\}.
\end{equation}
and consider the problem of finding minimizers of the functional $H(f,g)$ subject to $(f,g) \in \Sigma_{s,t}.$
To prevent dichotomy of minimizing sequences while applying concentration-compactness method, one require to prove
the strict subadditivity of the variational problem
with respect to the constraint parameters. More precisely,
we require to prove strict subadditivity of the function
\begin{equation}
\label{Ist}
\Theta(s,t)=\inf \left\{ H(f,g): (f,g) \in \Sigma_{s,t} \right\}.
\end{equation}
We establish the
strict subadditivity of $\Theta(s,t)$ following the ideas and
results contained in \cite{[AB11]}, which utilize the fact that the $H^{1}-$norms
of some functions are
strictly decreasing when the mass of the functions are symmetrically rearranged. The set of minimizers, namely
\begin{equation}
\mathcal{F}_{s,t}=\left\{(\Phi,\Psi) \in Y: H(\Phi,\Psi)=\Theta(s,t), (\Phi,\Psi)\in \Sigma_{s,t} \right\}.
\end{equation}
is shown to be stable in the sense that a solution which starts near the set will remain near it for all time. We also
consider the question about the characterization of the set $\mathcal{F}_{s,t}.$

\smallskip

The following are our existence and stability results.
\begin{thm}
\label{existence}
Suppose $\alpha, \beta, \tau >0$ and $2<p, r, 2q<6.$

\begin{enumerate}

\item[(a)]
The function $\Theta(s,t)$ defined in \eqref{Ist} is finite, and if $\{(f_{n},g_{n})\}$ is any sequence in $Y$ such that
\begin{equation}\label{mindef}
\lim_{n\to \infty }Q(f_{n})=s,\ \lim_{n\to
\infty }Q(g_{n})=t,\ \text{ and } \ \lim_{n\to
\infty }H(f_n,g_n)=\Theta(s,t),
\end{equation}
then there exists a subsequence $\{(f_{n_{k}},g_{n_{k}})\}$ and a family
$\{y_{k}\}\subset \mathbb{R}$ such that $\{(f_{n_{k}}(\cdot
+y_{k}),g_{n_{k}}(\cdot +y_{k})\}$ converges strongly in $Y$ to
some $(\Phi,\Psi)$ in $\mathcal{F}_{s,t}$.
 In particular, the set
$\mathcal{F}_{s,t}$ is non-empty.

\smallskip

\item[(b)] Each pair
$(\Phi,\Psi) \in \mathcal{F}_{s,t}$ is a solution of \eqref{ODE}
for some $\omega_{1}>0$ and $\omega_{2}>0$, and thus when inserted into \eqref{SO} yields a two-parameter solitary-wave
solution to the system \eqref{CNLS}.

\smallskip

\item[(c)]
For each pair $(\Phi,\Psi)$ in $\mathcal{F}_{s,t}$, there exist numbers
$\theta_{1}, \theta_{2} \in \mathbb{R}$ and functions $\tilde \phi$ and $\tilde \psi$ such that $\tilde \phi(x),
\tilde \psi(x)>0$ for all $x \in \mathbb R$, and
\begin{equation*}
\Phi(x)=e^{i\theta_{1}}\tilde\phi(x)\ \text{and}\ \Psi(x)=e^{i\theta_{2}}\tilde\psi(x).
\end{equation*}
Moreover, the functions
$\Phi$ and $\Psi$ are infinitely differentiable on $\mathbb R$.

\smallskip

\item[(d)] For any $s,t>0,$ the set $\mathcal{F}_{s,t}$ is stable in the following sense: for every $\epsilon >0$, there
exists $\delta>0$ such that if $(u_0 ,v_0 )\in Y$ satisfies
\begin{equation*}
\inf\{\|(u_0,v_0)-(\Phi ,\Psi)\|_{Y}:(\Phi,\Psi)\in \mathcal{F}_{s,t}\}<\delta ,
\end{equation*}
and $(u(x,t),v(x,t))$ is the solution
of \eqref{CNLS}
with $(u(x,0),v(x,0)) =(u_0,v_0),$
 then for all $t \ge 0$,
\begin{equation*}
\inf\{\|(u(\cdot,t),v(\cdot,t))-(\Phi,\Psi)\|_{Y}:(\Phi,\Psi)\in \mathcal{F}_{s,t}\}<\epsilon.
\end{equation*}
\end{enumerate}
 \end{thm}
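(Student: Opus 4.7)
The four parts of the theorem are linked by a standard concentration--compactness program, so the plan is to put the effort into part (a) and then derive (b)--(d) as consequences.

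\textbf{Part (a).} The first step is to verify that $\Theta(s,t)$ is finite and negative. Finiteness from below follows from the one-dimensional Gagliardo--Nirenberg inequality: each nonlinear term $|f|_p^p$, $|g|_r^r$, $|fg|_q^q$ can be bounded by $C(s,t)(|f_x|_2^2+|g_x|_2^2)^{\theta}$ with $\theta<1$, because the exponents satisfy $p,r,2q<6$. Choosing a suitable rescaling $(f_\lambda(x),g_\lambda(x))=(\lambda^{1/2}f(\lambda x),\lambda^{1/2}g(\lambda x))$ (which preserves the $L^2$ constraints) and evaluating $H$ gives $\Theta(s,t)<0$. Given a minimizing sequence $\{(f_n,g_n)\}\subset\Sigma_{s,t}$, the coercivity just used shows that it is bounded in $Y=H^1\times H^1$, so one can apply the concentration--compactness principle of P.~L.~Lions to the combined mass density $\rho_n=|f_n|^2+|g_n|^2$. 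Vanishing is ruled out because it would force all nonlinear terms to zero and hence $\Theta(s,t)\ge 0$. Dichotomy is ruled out by the strict subadditivity of $(s,t)\mapsto\Theta(s,t)$, which the paper has promised to establish via symmetric rearrangement in the spirit of \cite{[AB11]}; this is the one analytic input I expect to be the main obstacle, since in two parameters one has to compare $\Theta(s,t)$ with $\Theta(s_1,t_1)+\Theta(s-s_1,t-t_1)$ for all admissible splittings. Once compactness holds, after translating by the concentration centers $y_k$, the subsequence converges weakly then strongly (the $H$-convergence together with convergence of $L^2$-norms forces convergence of $H^1$-norms) to a minimizer $(\Phi,\Psi)\in\mathcal{F}_{s,t}$.

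\textbf{Parts (b) and (c).} Since $(\Phi,\Psi)$ minimizes $H$ on $\Sigma_{s,t}$, Lagrange multipliers $\omega_1,\omega_2\in\mathbb{R}$ exist such that the Euler--Lagrange system is exactly \eqref{ODE}. Testing the first equation against $\Phi$ and the second against $\Psi$, summing, and using the Pohozaev-type identity obtained by the same rescaling $\lambda\mapsto(f_\lambda,g_\lambda)$ used above gives two linear relations in $\omega_1,\omega_2$ with $\Theta(s,t)<0$ on the right-hand side, from which $\omega_1,\omega_2>0$ follows. For the structural statement in (c), I would first observe that replacing $(\Phi,\Psi)$ by $(|\Phi|,|\Psi|)$ does not increase $H$ (by the diamagnetic inequality $|\partial_x|u||\le|\partial_x u|$) nor change the constraints, and then replacing by the symmetric decreasing rearrangement $(|\Phi|^*,|\Psi|^*)$ again does not increase $H$ (by the P\'olya--Szeg\H{o} inequality and the Riesz rearrangement inequality applied to $\int|uv|^q$). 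Hence minimizers exist that are real, nonnegative, and even; the strong maximum principle applied to the elliptic system \eqref{ODE} (with $\omega_1,\omega_2>0$) upgrades nonnegativity to strict positivity. For a \emph{general} minimizer $(\Phi,\Psi)$, one shows that $|\Phi|$ and $|\Psi|$ are also minimizers, and then a phase-rigidity argument (case of equality in the diamagnetic inequality) yields $\Phi=e^{i\theta_1}\tilde\phi$, $\Psi=e^{i\theta_2}\tilde\psi$ with $\tilde\phi,\tilde\psi>0$. Smoothness is obtained by bootstrapping: $\tilde\phi,\tilde\psi\in H^1\subset C^{1/2}$, the right-hand sides of \eqref{ODE} are then continuous, so $\tilde\phi,\tilde\psi\in C^2$, and iterating gives $C^\infty$.

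\textbf{Part (d).} Stability follows from (a) by a now-standard contradiction argument. Suppose the set is not stable; then there exist $\epsilon_0>0$, initial data $(u_0^n,v_0^n)\to \mathcal{F}_{s,t}$ in $Y$, and times $t_n\ge 0$ with
\begin{equation*}
\inf_{(\Phi,\Psi)\in\mathcal{F}_{s,t}}\|(u^n(\cdot,t_n),v^n(\cdot,t_n))-(\Phi,\Psi)\|_Y\ge\epsilon_0.
\end{equation*}
Continuity of $Q$ and $H$ on $Y$ combined with the conservation laws recalled from \cite{[Ngu3]} show that $Q(u^n(t_n))\to s$, $Q(v^n(t_n))\to t$, and $H(u^n(t_n),v^n(t_n))\to\Theta(s,t)$. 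A small rescaling to land exactly on $\Sigma_{s,t}$ turns $(u^n(t_n),v^n(t_n))$ into a minimizing sequence in the sense of \eqref{mindef}, so part (a) provides translates that converge strongly in $Y$ to some element of $\mathcal{F}_{s,t}$. Since $\mathcal{F}_{s,t}$ is invariant under spatial translations (by translation invariance of $H$ and $Q$), this contradicts the displayed inequality above and finishes the proof.
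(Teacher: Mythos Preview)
Your overall strategy matches the paper's almost exactly: parts (a), (c), and (d) follow the same concentration--compactness, rearrangement/diamagnetic, and conservation-law contradiction arguments the paper uses (Lemmas~\ref{Ibounded}, \ref{posmin}, \ref{symmmin}, \ref{subadd}, \ref{GAst}--\ref{GAdicho}, and the final stability paragraph). The minor remark that in (d) you ``rescale to land on $\Sigma_{s,t}$'' is unnecessary, since the compactness statement in (a) is already formulated for sequences with $Q(f_n)\to s$, $Q(g_n)\to t$.

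The one place your sketch diverges from the paper, and where it is genuinely incomplete, is the positivity of the Lagrange multipliers in part~(b). Testing each Euler--Lagrange equation against $\bar\Phi,\bar\Psi$ gives
\[
-\omega_1 s=\|\Phi_x\|^2-\alpha|\Phi|_p^p-\tau|\Phi\Psi|_q^q,\qquad
-\omega_2 t=\|\Psi_x\|^2-\beta|\Psi|_r^r-\tau|\Phi\Psi|_q^q,
\]
while the Pohozaev identity coming from the joint dilation $(f_\lambda,g_\lambda)$ is a \emph{single} scalar relation not containing $\omega_1,\omega_2$ at all. From these you can extract something like $\omega_1 s+\omega_2 t>0$, but not $\omega_1>0$ and $\omega_2>0$ separately; there are too many independent unknowns ($\|\Phi_x\|^2,\|\Psi_x\|^2,|\Phi|_p^p,|\Psi|_r^r,|\Phi\Psi|_q^q$) for two relations to pin down the signs individually. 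The paper does \emph{not} use a Pohozaev argument here. Instead it proves a separate comparison lemma (Lemma~\ref{negative}): for any minimizing sequence one has, for large $n$,
\[
\|f_{nx}\|^2-a|f_n|_p^p-c|f_ng_n|_q^q\le-\delta_1<0,
\]
and symmetrically for $g_n$. The proof is by contradiction: if the left side were nonnegative along a subsequence, one compares $\Theta(s,t)$ to the \emph{scalar} minimum $J(g_t)$ of Lemma~\ref{minforJ} and then inserts a suitably dilated test function in the first slot to beat $J(g_t)$, contradicting minimality. Applying this with $(f_n,g_n)=(\Phi,\Psi)$ and reading off the tested identities above yields $\omega_1,\omega_2>0$. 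You should replace the Pohozaev sketch by this component-wise comparison argument (or supply an alternative that actually isolates each multiplier).
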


The method presented in this paper should be easily extendable to versions of \eqref{CNLS}
with combined power-type nonlinearities, such as the following system of coupled
nonlinear Schr\"{o}dinger equations
\begin{equation}\label{SongCNLS}
\left\{
\begin{aligned}
iu_{t}+u_{xx}+\alpha|u|^{p-2}u+\sum_{k=1}^{m} \tau |v|^{q_k}|u|^{q_k-2}u&=0 \\
iv_{t}+v_{xx}+\beta |v|^{r-2}v+ \sum_{k=1}^{m} \tau |u|^{q_k}|v|^{q_k-2}v&=0.
\end{aligned}
\right.
\end{equation}
The global existence of the solutions of this system is studied in \cite{[Song1]}. The energy
functional $K$ defined by
\begin{equation*}
K(u,v)=\frac{1}{2}\left(|u_x|_{2}^2+|v_x|_{2}^2\right) - \frac{1}{p}\left(\alpha|u|_{p}^p+\beta|v|_{r}^r+\tau \sum_{k=1}^{m}|uv|_{q_k}^{q_k}\right)
\end{equation*}
is conserved for the flow defined by \eqref{SongCNLS}. The functionals $Q(u)$ and $Q(v)$ defined above are conserved functionals for
\eqref{SongCNLS} as well. Our method can be applied to prove
an analogue of Theorem~\ref{existence} concerning existence and stability results of vector solitary-wave solutions to \eqref{SongCNLS}
for all $\alpha, \beta, \tau >0,$ and all $2<p, r, 2q_k<6 \ (k=1,2, . . .,m).$

\smallskip

\textit{Notation}.
For $1\leq s\leq \infty ,$ the space of complex measurable functions whose $s-$th
power is integrable will be denoted by
$L^{s}=L^{s}(\mathbb{R})$ and its standard norm by $\left\vert f\right\vert _{s},$
\begin{equation*}
\left\vert f\right\vert _{s}=\left( \int_{-\infty }^{\infty }\left\vert
f\right\vert ^{s}dx\right) ^{1/s}\textrm{ \ for }1\leq s<\infty
\end{equation*}
and $\left\vert f\right\vert _{\infty }$ is the essential supremum of $%
\left\vert f\right\vert $ on $\mathbb{R}.$ We denote
by $H^{1}(\mathbb{R})$ the Sobolev space of all complex-valued, measurable functions
defined on $\mathbb{R}$ such that both $f$ and
$f^{\prime}$ are in $L^{2}.$ The norm $\|.\|_{1}$ on $H^{1}$ is defined by
 \begin{equation*}
 \|f\|_{1}=\left(\int_{-\infty }^{\infty }(|f|^{2}+|f^{\prime}|^{2})\right)^{1/2}.
 \end{equation*}
In particular, we use $\left\Vert
f\right\Vert $ to denote the $L^{2}$ norm of a
function $f.$ We
define the space $Y$ to be the Cartesian product $H^{1}(\mathbb{R})\times H^{1}(%
\mathbb{R}),$ furnished with the norm
\begin{equation*}
\|(f,g)\|_{Y}^{2}=\|f \|_{1}^{2}+\|g \|_{1}^{2}.
\end{equation*}
 The letter $C$ will denote various positive constants whose exact values may change
from line to line but are not essential in the course of the analysis.

\section{Existence and Stability Results}

We assume throughout this paper, unless otherwise stated, that the assumptions $\alpha,\beta,\tau>0,$ and $2 < p, r, 2q <6$ hold.

\smallskip

To each minimizing sequence $\{(f_{n},g_{n})\}$ of $\Theta(s,t),$ we associate a sequence of nondecreasing functions
$M_{n}:[0,\infty)\to [0,s+t]$ defined by
\begin{equation*}
M_n(\zeta)=\sup_{y\in \mathbb{R}}\int_{y-\zeta}^{y+\zeta}\rho _{n}(x)\ dx.
\end{equation*}
where $\rho _{n}(x):=|f_{n}(x)|^2 + |g_{n}(x)|^2.$
An elementary argument (by Helly's selection theorem, for example) shows that any uniformly bounded sequence of nondecreasing functions on
$[0,\infty)$ must have a subsequence which converges pointwise (in fact, uniformly on compact sets)
to a nondecreasing limit function on $[0,\infty).$
Thus, $M_{n}(\zeta)$
has such a subsequence (see Lemma~\ref{Ibounded} below), which we again denote by $M_n.$
Let $M(\zeta):[0,\infty)\to [0,s+t]$ be the nondecreasing function to which $M_n$ converges, and define
\begin{equation}
\label{defgamma} \gamma =\lim_{\zeta\to \infty }M(\zeta).
\end{equation}
Then $\gamma$ satisfies $0\leq \gamma \leq s+t$. From Lions' Concentration Compactness Lemma (see \cite{[L]}), there
are three possibilities for the value of $\gamma$ that correspond to three
distinct types of limiting behavior of the sequence $\rho _{n}(x)$ as $%
n\to \infty ,$ which are suggestively labeled by Lions as
`vanishing', `dichotomy' and `compactness', respectively:
\begin{itemize}
\item[$(a)$] Case $1:($\textit{Vanishing}) $\gamma=0.$ Since $M(\zeta)$ is non-negative
and nondecreasing, this is equivalent to saying%
\begin{equation*}
M(\zeta)=\lim_{n\to \infty }M_{n}(\zeta)=\lim_{n\to \infty
}\sup_{y\in \mathbb{R}}\int_{y-\zeta}^{y+\zeta}\rho _{n}(x)\ dx=0,
\end{equation*}
for all $\zeta<\infty ,\ $or
\item[$(b)$] Case $2:($\textit{Dichotomy}) $\gamma \in (0,s+t),\ $or
\item[$(c)$] Case $3:($\textit{Compactness}) $\gamma=s+t,$ that is, there exists $%
\{y_{n}\}\subset \mathbb{R}$ such that $\rho _{n}(.+y_{n})$ is tight,
namely, for all $\varepsilon >0,$ there exists $\zeta<\infty $ such that for all $n \in \mathbb{N},$%
\begin{equation*}
\int_{y_{n}-\zeta}^{y_{n}+\zeta}\rho _{n}(x)dx\geq (s+t)-\varepsilon .
\end{equation*}
\end{itemize}
The method of concentration compactness
, as applied to this situation, consists of the observation
that if $\gamma=s+t,$  then
the minimizing sequence $\{(f_{n},g_{n})\}$ has a subsequence which, up to
translations in the underlying spatial domain, converges strongly in $Y$ to an element of $\mathcal{F}_{s,t}.$ Typically, one proves $\gamma = s+t$ by ruling out
the other two possibilities. We now give the
details of the method, and prove our existence and stability results.

\smallskip

We first establish some properties of $\Theta(s,t)$ and its minimizing
sequences which are independent of the value $\gamma.$

\smallskip

\begin{lem} \label{Ibounded} If $\{(f_n,g_n)\}$ is a minimizing sequence $\Theta(s,t),$ then there exists
constants $B>0$ such that
\begin{equation*}
\|f_n\|_1+\|g_n\|_1 \leq B \ \text{for all} \ n.
\end{equation*}
Moreover, for every $s,t>0,$ one has
$-\infty <\Theta(s,t)<0$.
\end{lem}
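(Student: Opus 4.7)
The plan is to derive both assertions from the one--dimensional Gagliardo--Nirenberg inequality (for the lower bound on $\Theta$ and the a priori bound on minimizing sequences) and a simple scaling test (for the strict inequality $\Theta(s,t)<0$). The hypothesis $2<p,r,2q<6$ is what makes both sides of the argument close.

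For the coercivity side, the one--dimensional Gagliardo--Nirenberg inequality gives, for every exponent $\mu>2$,
\begin{equation*}
|f|_\mu^\mu\leq C\,\|f_x\|^{\mu/2-1}\,\|f\|^{\mu/2+1}.
\end{equation*}
Applying this to $|f|_p^p$, to $|g|_r^r$, and to each factor after writing $|fg|_q^q\le|f|_{2q}^q\,|g|_{2q}^q$ via Cauchy--Schwarz, and using that on $\Sigma_{s,t}$ the $L^2$ norms are frozen at $\|f\|^2=s$ and $\|g\|^2=t$, I would obtain
\begin{equation*}
a|f|_p^p+b|g|_r^r+c|fg|_q^q\le C_{s,t}\Bigl(\|f_x\|^{p/2-1}+\|g_x\|^{r/2-1}+\|f_x\|^{(q-1)/2}\|g_x\|^{(q-1)/2}\Bigr).
\end{equation*}
Because $p/2-1$, $r/2-1$, and $q-1$ all lie strictly below $2$ (this is where $p,r,2q<6$ is used), Young's inequality absorbs the right--hand side into $\tfrac12(\|f_x\|^2+\|g_x\|^2)+C'_{s,t}$. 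Substituting into the definition of $H$ yields
\begin{equation*}
H(f,g)\ge \tfrac12\bigl(\|f_x\|^2+\|g_x\|^2\bigr)-C'_{s,t},
\end{equation*}
which both proves $\Theta(s,t)>-\infty$ and forces any minimizing sequence to have uniformly bounded gradient norms. Combined with the constraints $Q(f_n)\to s$, $Q(g_n)\to t$, this gives the desired uniform bound $\|f_n\|_1+\|g_n\|_1\le B$.

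For the strict inequality $\Theta(s,t)<0$, I would construct an explicit test pair by scaling. Fix any $\phi\in H^1(\mathbb R)$ with $\|\phi\|=1$ and with $|\phi|_p,|\phi|_r,|\phi|_{2q}$ all strictly positive, and for $\sigma>0$ set $\phi_\sigma(x)=\sigma^{1/2}\phi(\sigma x)$, so that $\|\phi_\sigma\|=1$. Then $(f_\sigma,g_\sigma):=(\sqrt{s}\,\phi_\sigma,\sqrt{t}\,\phi_\sigma)$ lies in $\Sigma_{s,t}$, and a direct computation gives
\begin{equation*}
H(f_\sigma,g_\sigma)=\sigma^{2}(s+t)\|\phi'\|^2-a\,s^{p/2}\sigma^{p/2-1}|\phi|_p^p-b\,t^{r/2}\sigma^{r/2-1}|\phi|_r^r-c\,(st)^{q/2}\sigma^{q-1}|\phi|_{2q}^{2q}.
\end{equation*}
Since $p/2-1$, $r/2-1$, and $q-1$ are each positive but strictly less than $2$, the three negative terms dominate the kinetic term as $\sigma\to 0^+$, so $H(f_\sigma,g_\sigma)<0$ for all sufficiently small $\sigma$, which gives $\Theta(s,t)<0$.

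The only obstacle is bookkeeping rather than content: one must track which exponent range is being used at each step and verify that each half of the hypothesis $2<p,r,2q<6$ is genuinely needed---the lower bound making the Gagliardo--Nirenberg exponents on $\|f_x\|,\|g_x\|$ positive (so the nonlinear terms carry the negative sign needed for $\Theta<0$), and the upper bound making them strictly less than $2$ (so the kinetic term absorbs them in Young's inequality and dominates them under scaling).
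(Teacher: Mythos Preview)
Your proposal is correct and follows essentially the same approach as the paper: Gagliardo--Nirenberg with the same exponents $(p-2)/2$, $(r-2)/2$, $q-1$ for the lower bound and boundedness, and the same $L^2$-preserving dilation $\phi_\sigma(x)=\sigma^{1/2}\phi(\sigma x)$ for $\Theta(s,t)<0$. The only cosmetic difference is that you package the coercivity as a single inequality $H(f,g)\ge \tfrac12(\|f_x\|^2+\|g_x\|^2)-C'_{s,t}$ via Young's inequality, which yields the lower bound on $\Theta$ and the $H^1$ bound on minimizing sequences simultaneously, whereas the paper treats these two conclusions separately (first bounding $\|(f_n,g_n)\|_Y^2$ by lower powers of itself, then minimizing an auxiliary function $Z(x,y)$); your version is slightly more economical but not materially different.
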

\begin{proof}
Using the Gagliardo-Nirenberg inequality,  we have
\begin{equation}
\label{GLforf} |f_{n}|_{p}^{p}\leq
C\|f_{nx}\|^{(p-2)/2} \cdot \|f_{n}\|^{(p+2)/2}.
\end{equation}
Since $ \{(f_{n},g_{n})\}$ is a minimizing sequence, both
$\|f_{n }\|$ and $\|g_{n }\|$ are
bounded. Then, from \eqref{GLforf}, we obtain
\begin{equation}
\label{fLqbound}
 |f_n|_{p}^{p} \le C \|f_{nx}\|^{(p-2)/2}\leq C\|(f_n,g_n)\|_Y^{(p-2)/2},
\end{equation}
where $C$ denotes various constants which are independent of $f_n$ and $g_n.$
Similarly, we have the following estimate
\begin{equation}
\label{gLpbound}
 |g_n|_{r}^{r} \le C \|g_{nx}\|^{(r-2)/2} \le C\|(f_n,g_n)\|_Y^{(r-2)/2}.
\end{equation}
From Cauchy-Schwartz inequality, we also have
\begin{equation}\label{mixtermbound}
 |f_ng_n|_{q}^{q}\ dx
\leq \frac{1}{2}\left(|f_n|_{2q}^{2q}+|g_n|_{2q}^{2q}\right) \le
C\|(f_n,g_n)\|_Y^{q-1}.
\end{equation}

Now, we write
\begin{equation*}
\begin{aligned}
& \|(f_n,g_n)\|_Y^2 =\|f_n\|_1^2+\|g_n\|_1^2 \\
&\ \ =H(f_n,g_n)+
 \left(a|f_{n}|_{p}^{p}+b|g_{n}|_{r}^{r}+c|f_{n}g_n|_{q}^{q}\right) \ dx +(s+t).
\end{aligned}
\end{equation*}
Since $H(f_n,g_n)$ is bounded, we obtain
\begin{equation*}
\|(f_n,g_n)\|_Y^2 \le C\left(1 +
\|(f_n,g_n)\|_Y^{(p-2)/2}+|(f_n,g_n)\|_Y^{(r-2)/2}+\|(f_n,g_n)\|_Y^{q-1}\right),
\end{equation*}
As the norm of the minimizing sequence $ \{(f_{n},g_{n})\}$ is bounded by itself but with smaller power,
the existence of the desired bound $B$ follows.

\smallskip

To see $\Theta(s,t)>-\infty,$ it suffices to bound $H(f,g)$ from below by a number which
is independent of $f$ and $g.$ Using
the estimates \eqref{fLqbound}, \eqref{gLpbound}, and \eqref{mixtermbound}, we obtain
for $(f,g)\in \Sigma_{s,t},$
\begin{equation*}
\begin{aligned}
H(f,g) \geq \|f_x\|^2 & +\|g_x\|^2-C \|f_x\|^{(p-2)/2}-C \|g_x\|^{(r-2)/2}\\
\ & -C (\|f_x\|^{q-1}+\|g_x\|^{q-1}),
\end{aligned}
\end{equation*}
where $C$ denotes various constants independent of $f$ and $g.$ Let us define
\begin{equation*}
Z(x,y)=|x|^2+|y|^2-C (|x|^{(p-2)/2}+|y|^{(r-2)/2}+|x|^{q-1}+|y|^{q-1}).
\end{equation*}
Since $2<p,r,2q<6,$ we have $\varrho :=\min Z(x,y)>-\infty.$
In particular, for all $(f,g)\in \Sigma_{s,t},$ we have that
\begin{equation*}
H(f,g)\geq Z(\|f_x\|,\|g_x\|) \geq \varrho > -\infty.
\end{equation*}

\smallskip

To see that $\Theta(s,t)<0$, choose $(f,g)\in \Sigma_{s,t},$
and $f(x)>0$ and $g(x)>0$ for all $x
\in \mathbb{R}$. For each $\theta >0,$ the functions $f_{\theta
}(x)=\theta ^{1/2}f(\theta x)$ and $g_{\theta }(x)=\theta
^{1/2}g(\theta x)$ satisfy $(f_{\theta},g_{\theta})\in \Sigma_{s,t},$ and
\begin{equation*}
\begin{aligned}
H(f_{\theta },g_{\theta }) &=\int_{-\infty }^{\infty
}\left(|f_{\theta x}|^{2}+|g_{\theta x}|^2
-a|f_{\theta }|^{p}-b|g_{\theta }|^{r}-c|f_{\theta }|^{q}|g_{\theta}|^{q}\right) \ dx \\
&\leq \theta ^{2}\int_{-\infty }^{\infty
}\left(|f_x|^{2}+|g_x|^{2}\right)\ dx -\theta^{q-1}\int_{-\infty
}^{\infty }c|f|^{q}|g|^{q} \ dx.
\end{aligned}
\end{equation*}
Hence, by taking $\theta $ sufficiently small, we get $H(f_{\theta
},g_{\theta})<0.$
\end{proof}

\begin{lem} \label{bdgnxbelow}
Let $(f_n,g_n)$ be a minimizing sequence for $\Theta(s,t)$. Then for all sufficiently large $n,$

(i) if $t>0$ and $s \ge 0$,
then $\exists$ $\delta_{1}>0$ such that $\|g_{nx}\|\geq \delta_{1}.$

(ii) if $s>0$ and $t \geq 0,$ then $\exists$ $\delta_{2}>0$ such that $\|f_{nx}\|\geq \delta_{2}.$
\end{lem}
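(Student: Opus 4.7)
The plan is to prove part (i) by contradiction; part (ii) will follow by the analogous argument with $f_n,g_n$ and the exponents $p,r$ interchanged. Assume toward contradiction that $\|g_{nx}\|\to 0$ along a subsequence. By Lemma~\ref{Ibounded}, $(f_n,g_n)$ is bounded in $Y$, so the Gagliardo--Nirenberg inequality (as used in \eqref{GLforf}) gives $|g_n|_r^r\le C\|g_{nx}\|^{(r-2)/2}\to 0$ and $|g_n|_{2q}^{2q}\le C\|g_{nx}\|^{q-1}\to 0$, thanks to $2<r,2q<6$. Since $|f_n|_{2q}$ stays uniformly bounded by the same Gagliardo--Nirenberg estimate applied to $f_n$, H\"older's inequality forces $|f_ng_n|_q^q\le|f_n|_{2q}^q|g_n|_{2q}^q\to 0$, and \eqref{energydef} then reduces to
\begin{equation*}
\|f_{nx}\|^2 - a|f_n|_p^p \longrightarrow \Theta(s,t).
\end{equation*}

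Next, I would introduce the single-component quantities $\tilde H(f):=\|f_x\|^2-a|f|_p^p$ and $\tilde\Theta(s):=\inf\{\tilde H(f):\|f\|^2=s\}$; the Gagliardo--Nirenberg argument used in Lemma~\ref{Ibounded} shows $\tilde\Theta(s)\in\mathbb R$. In the case $s>0$, the plan is to rescale $\hat f_n=(\sqrt{s}/\|f_n\|)f_n$ so that $\|\hat f_n\|^2=s$ exactly, observing that $\tilde H(\hat f_n)-\tilde H(f_n)=o(1)$, which upgrades the display above to $\Theta(s,t)\ge\tilde\Theta(s)$. When $s=0$, instead $\|f_n\|\to 0$ makes $|f_n|_p^p\to 0$ by Gagliardo--Nirenberg, so $\Theta(0,t)\ge 0$, while Lemma~\ref{Ibounded} gives $\Theta(0,t)<0$---a contradiction already.

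The final task, and the main substance of the argument, will be the strict inequality $\Theta(s,t)<\tilde\Theta(s)$ in the case $s>0$. I would fix $\psi\in H^1(\mathbb R)$ with $\psi>0$ and $\|\psi\|^2=t$, and set $\psi_\theta(x):=\theta^{1/2}\psi(\theta x)$ so that $\|\psi_\theta\|^2=t$. A direct computation gives
\begin{equation*}
\|\psi_{\theta x}\|^2 - b|\psi_\theta|_r^r = \theta^2\|\psi_x\|^2 - b\theta^{(r-2)/2}|\psi|_r^r,
\end{equation*}
and since $(r-2)/2<2$ because $r<6$, the right-hand side attains a strictly negative minimum $-\delta_0<0$ at some $\theta_\ast>0$ that depends only on $\psi$. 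For any $f$ with $\|f\|^2=s$,
\begin{equation*}
H(f,\psi_{\theta_\ast})=\tilde H(f)+\bigl(\|\psi_{\theta_\ast x}\|^2-b|\psi_{\theta_\ast}|_r^r\bigr)-c|f\psi_{\theta_\ast}|_q^q\le\tilde H(f)-\delta_0,
\end{equation*}
since the cross term is nonnegative; taking the infimum over such $f$ then yields $\Theta(s,t)\le\tilde\Theta(s)-\delta_0<\tilde\Theta(s)$, contradicting the preceding paragraph. The main obstacle I anticipate is making the inequality $\Theta(s,t)\ge\tilde\Theta(s)$ survive the fact that $\|f_n\|^2$ only converges to $s$ rather than equaling it, which is what forces the rescaling step; the rest is a straightforward Gagliardo--Nirenberg and scaling calculation.
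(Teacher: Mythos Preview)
Your argument is correct and follows essentially the same route as the paper: assume $\|g_{nx}\|\to 0$ along a subsequence, use Gagliardo--Nirenberg to kill the $g$-dependent terms in $H$, and then use the scaled test function $\psi_\theta$ to produce a strictly negative $g$-contribution, yielding a contradiction.

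The only difference is organizational. The paper avoids introducing $\tilde\Theta(s)$ altogether: it simply plugs the pair $(f_n,\psi_\theta)$ directly into the variational inequality $\Theta(s,t)\le H(f_n,\psi_\theta)\le \tilde H(f_n)+\eta$ with $\eta<0$, and then passes to the limit in $n$ to obtain $\Theta(s,t)\le \Theta(s,t)+\eta$, a contradiction in one stroke. Your version factors this through the auxiliary infimum $\tilde\Theta(s)$ and proves $\Theta(s,t)\ge\tilde\Theta(s)$ and $\Theta(s,t)<\tilde\Theta(s)$ separately. In exchange for the extra notation, you are more careful on two points the paper glosses over: you explicitly rescale to handle $\|f_n\|^2\to s$ rather than $\|f_n\|^2=s$, and you treat the $s=0$ case separately (note that Lemma~\ref{Ibounded} is stated only for $s,t>0$, so for $s=0$ you should justify $\Theta(0,t)<0$ directly by the same scaling argument applied to $H(0,g)=\|g_x\|^2-b|g|_r^r$, which is immediate).
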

\begin{proof}
Suppose to the contrary that (i) is false. Then, by passing to a subsequence if necessary, we may assume there exists
a minimizing sequence for which $\displaystyle \lim_{n \to \infty} \|g_{nx}\| = 0$.
By Gagliardo-Nirenberg inequalities, it then follows that
\begin{equation*}
\lim_{n \to \infty} \int_{-\infty}^\infty |g_n|^{r}\ dx = 0 \ \text{and} \ \lim_{n \to \infty} \int_{-\infty }^{\infty }|f_n|^{q}|g_n|^{q}\ dx=0.
\end{equation*}
Therefore, we have that
\begin{equation}
\label{Ibbelow}
\begin{aligned}
\Theta(s,t) &=\lim_{n\to \infty }H(f_n,g_n) \\
&=\lim_{n\to \infty }\int_{-\infty }^{\infty } \left(|f_{nx}|^{2}-a|f_{n}|^{p}\right) \ dx.
\end{aligned}
\end{equation}
Pick any non-negative function $\psi$ such that $\|\psi\|^2=t$.  For every $\theta >0$,
the function $\psi_\theta(x)=\theta^{1/2}\psi(\theta x)$ satisfies $\|\psi_\theta\|^2=t$, and
hence, for all $n,$
\begin{equation*}
\Theta(s,t) \le H(f_n,\psi_\theta).
\end{equation*}
On the other hand, if we define
\begin{equation}\label{elzero}
\eta = \theta^2\int_{-\infty }^{\infty }|\psi_x|^2\ dx-
\theta^{(r-2)/2} \int_{-\infty }^{\infty }b|\psi|^{r}\ dx,
\end{equation}
then $\eta < 0$ for sufficiently small $\theta.$
Then, for all $n\in\mathbb{N}$,
\begin{equation*}
\begin{aligned}
\Theta(s,t) &\leq H(f_n,\psi_\theta) \\
& \le \int_{-\infty}^{\infty } \left(|f_{nx}|^2-a
|f_n|^{p}\right) \ dx + \eta.
\end{aligned}
\end{equation*}
Consequently
\begin{equation*}
\begin{aligned}
\Theta(s,t) &\leq \lim_{n \to \infty}\int_{-\infty}^{\infty
}\left(|f_{nx}|^{2}-a |f_n|^{p}\right)\ dx
+ \eta,
\end{aligned}
\end{equation*}
which contradicts \eqref{Ibbelow} and \eqref{elzero}. The case (ii) can be proved similarly.
\end{proof}
\begin{lem}\label{minforJ}
\label{Is0} Let $1<\alpha<5$ and $\beta > 0$. Define $
J:H^{1}(\mathbb{R}) \to \mathbb R$ by
\begin{equation}\label{defJ}
J(h(x,t))=\int_{-\infty}^\infty\left(|h_x(x,t)|^2 - \beta
|h(x,t)|^{\alpha+1}\right)\ dx.
\end{equation}
Let $s>0$, and let $\{h_n\}$ be any sequence in $H^1$ such that
$\|h_n\|^2 \to s$
 and
\begin{equation*}
\lim_{n \to \infty} J(h_n) = \inf \ \left\{J(h): h \in H^1\ {\rm
and }\ \|h\|^2 = s\right\}.
\end{equation*}
Then there exists a subsequence $\{h_{n_k}\}$, a family $\{y_k\}\subset \mathbb{R},$
and a real number $\theta$ such that $e^{-i\theta}h_{n_k}(x+y_k)$ converges strongly in
$H^1$ norm to $h_s(x)$, where
\begin{equation}
\label{h0}
 h_s(x)=\left(\frac{\lambda}{\beta}\right)^{1/(\alpha-1)}{\rm
sech}^{2/(\alpha-1)}\left(\frac{\sqrt{\lambda} (\alpha-1) x}{2}\right),
\end{equation}
and $\lambda > 0$ is chosen so that $\|h_s\|^2 = s$. In particular,
\begin{equation}
J(h_s)=\inf \ \left\{J(h): h \in H^1\
{\rm and }\ \|h\|^2 = s\right\}. \label{f0ismin}
\end{equation}
\end{lem}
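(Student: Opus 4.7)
The plan is to carry out the standard Lions concentration-compactness scheme for the scalar minimization
\[
I(s)=\inf\{J(h):h\in H^{1}(\mathbb{R}),\ \|h\|^{2}=s\},
\]
followed by an ODE classification argument. First I would show that $-\infty<I(s)<0$ and that any minimizing sequence $\{h_{n}\}$ is bounded in $H^{1}$. Finiteness of $I(s)$ and $H^{1}$-boundedness of $\{h_{n}\}$ both come from the Gagliardo-Nirenberg inequality $|h|_{\alpha+1}^{\alpha+1}\le C\|h_{x}\|^{(\alpha-1)/2}\|h\|^{(\alpha+3)/2}$ together with the hypothesis $\alpha<5$ (so that $(\alpha-1)/2<2$), exactly as in Lemma~\ref{Ibounded}. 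Negativity is obtained from the scaling $h_{\theta}(x)=\theta^{1/2}h(\theta x)$, which preserves $\|h\|^{2}$ and gives
\[
J(h_{\theta})=\theta^{2}\|h_{x}\|^{2}-\beta\theta^{(\alpha-1)/2}|h|_{\alpha+1}^{\alpha+1},
\]
which is negative for $\theta$ sufficiently small.

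Second, I would apply the concentration-compactness lemma to $\rho_{n}=|h_{n}|^{2}$ and rule out vanishing and dichotomy. Vanishing would force $h_{n}\to 0$ in $L^{\alpha+1}$ by Lions' vanishing lemma, whence $I(s)=\lim J(h_{n})\ge 0$, contradicting $I(s)<0$. To rule out dichotomy I would establish the strict scaling estimate $I(\mu s)<\mu I(s)$ for every $\mu>1$: applying the mass-rescaling $h\mapsto\mu^{1/2}h$ to a minimizing sequence yields
\[
J(\mu^{1/2}h_{n})=\mu J(h_{n})-\bigl(\mu^{(\alpha+1)/2}-\mu\bigr)\beta|h_{n}|_{\alpha+1}^{\alpha+1},
\]
and since $(\alpha+1)/2>1$ while a uniform positive lower bound on $|h_{n}|_{\alpha+1}^{\alpha+1}$ is forced by $I(s)<0$ and the $H^{1}$-bound on $\{h_{n}\}$, I obtain $I(\mu s)\le\mu I(s)-c$ with some $c>0$. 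Writing $I(s_{i})>(s_{i}/s)I(s)$ for $s_{1}+s_{2}=s$, $s_{1},s_{2}>0$, and summing then produces the strict subadditivity $I(s_{1})+I(s_{2})>I(s)$, which is the standard device for excluding dichotomy.

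Third, concentration must therefore hold: after translating by a suitable sequence $\{y_{k}\}$, a subsequence $h_{n_{k}}(\cdot+y_{k})$ is tight, converges weakly in $H^{1}$ to some $h_{*}$ with $\|h_{*}\|^{2}=s$, and strongly in $L^{\alpha+1}$ (by tightness combined with Rellich-Kondrachov on large intervals and Gagliardo-Nirenberg on the complement). Weak lower semicontinuity then gives $J(h_{*})\le\liminf J(h_{n_{k}})=I(s)$, so $h_{*}$ is a minimizer; matching this with $J(h_{n_{k}})\to J(h_{*})$ forces $\|h_{n_{k},x}\|\to\|h_{*,x}\|$ and hence strong $H^{1}$ convergence. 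Finally, since $|h_{*}|$ is also admissible and has no larger kinetic energy, one may replace $h_{*}$ by $e^{i\theta}\tilde h_{*}$ with $\tilde h_{*}\ge 0$; the Euler-Lagrange equation $-\tilde h_{*}''+\lambda \tilde h_{*}=\tfrac{\alpha+1}{2}\beta\tilde h_{*}^{\alpha}$ (with $\lambda>0$ determined by $\|\tilde h_{*}\|^{2}=s$) together with the classical uniqueness of positive, decaying solutions of this scalar ODE on $\mathbb{R}$ identifies $\tilde h_{*}$ with the explicit sech profile $h_{s}$ of \eqref{h0} up to translation. The main technical obstacles are the strict subadditivity via the scaling identity (which excludes dichotomy) and the passage from weak to strong $H^{1}$ convergence; the ODE classification itself is a standard result.
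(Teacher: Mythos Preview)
Your proposal is correct and takes essentially the same approach as the paper: the paper's proof simply invokes ``the Cazenave--Lions method'' for the precompactness of minimizing sequences (which is exactly the concentration-compactness scheme you spell out, with strict subadditivity via the mass-rescaling $h\mapsto\mu^{1/2}h$), and then cites Theorem~8.1.6 of Cazenave's book for the ODE classification of the limit, which is your final step. The only difference is that you have written out the details the paper suppresses by citation.
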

\begin{proof}
The fact that some translated subsequence of $h_{n}$ must converge
strongly in $H^{1}$ norm can be proved by
the use of Cazenave-Lions method (see, for example, \cite{[CL], [Ca]}).

\smallskip

Let $\varphi \in H^1$ be the limit of the translated subsequence $\{h_{n_k}(x+\tilde y_k)\}$ of $\{h_n\}$.
Then the limit function $\varphi$ satisfies
\begin{equation}
\label{E220}
J(\varphi)=\inf \ \left\{J(h): h \in H^1\ {\rm and }\ \|h\|^2 =
s\right\},
\end{equation}
and also be a solution of
\begin{equation}
-2\varphi'' -(\alpha+1)\beta \varphi^{\alpha} = -2\lambda \varphi \label{elforg}
\end{equation}
for some real number $\lambda.$
It is well known (see Theorem 8.1.6 of \cite{[Ca]}) that the solutions of \eqref{elforg} can be described explicitly by
\begin{equation*}
\{e^{i\theta} h_s(\cdot + y_s),\ y_s, \theta \in \mathbb{R}\}.
\end{equation*}
Then \eqref{f0ismin} follows from \eqref{E220}. Also, if we define $y_k =\tilde y_k-y_s$, then we have that
$e^{-i\theta}h_{n_k}(x+y_k)$ converges in $H^1$ to $h_s$.
\end{proof}

\begin{lem} \label{negative}
Suppose $(f_n,g_n)$ is a minimizing sequence for $\Theta(s,t)$, where
$s>0$ and $t >0$.  Then there exists $\delta_{1}>0$ and $\delta_{2}>0$ such that for all
sufficiently large $n$,
\begin{equation*}
|f_{nx}|_{2}^2 -a|f_n|_{p}^{p}
-c|f_ng_n|_{q}^q \le -\delta_{1},\ \text{and} \ |g_{nx}|_{2}^2 -b|g_n|_{r}^{r}
-c|f_ng_n|_{q}^q \le -\delta_{2}.
\end{equation*}
\end{lem}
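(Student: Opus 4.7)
The plan is to obtain the first inequality (the bound on $A_n := |f_{nx}|_2^2 - a|f_n|_p^p - c|f_n g_n|_q^q$) by constructing a one-sided competitor to $(f_n,g_n)$ that replaces only the $f$-component by a stretched copy of a fixed profile, and then exploiting minimality of $\Theta(s,t)$. The second inequality will follow by the symmetric construction with the roles of the two components swapped and the exponent $r<6$ used in place of $p<6$.

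Concretely, I would first fix, once and for all, a profile $\psi_0 \in H^1(\mathbb{R})$ with $|\psi_0|_2^2 = s$ and $|\psi_0|_p^p > 0$ (any smooth bump of mass $s$ works). For $\sigma>0$ set $\psi_\sigma(x) := \sigma^{1/2}\psi_0(\sigma x)$, so $|\psi_\sigma|_2^2 = s$ while $|\psi_{\sigma x}|_2^2 = \sigma^2 |\psi_{0x}|_2^2$ and $|\psi_\sigma|_p^p = \sigma^{(p-2)/2}|\psi_0|_p^p$. Computing $H(\psi_\sigma, g_n)$ and simply discarding the nonpositive mixed term $-c\int|\psi_\sigma|^q|g_n|^q\,dx$ yields
\[
H(\psi_\sigma, g_n) \le \bigl(|g_{nx}|_2^2 - b|g_n|_r^r\bigr) + \sigma^2|\psi_{0x}|_2^2 - a\sigma^{(p-2)/2}|\psi_0|_p^p.
\]
Since $p<6$ forces $(p-2)/2 < 2$, for $\sigma$ small the negative $\sigma^{(p-2)/2}$-term dominates the positive $\sigma^2$-term; concretely, fix $\sigma_*>0$ so that $\sigma_*^2|\psi_{0x}|_2^2 \le \tfrac12 a\sigma_*^{(p-2)/2}|\psi_0|_p^p$, and set $\delta_0 := \tfrac12 a\sigma_*^{(p-2)/2}|\psi_0|_p^p > 0$, so that
\[
H(\psi_{\sigma_*}, g_n) \le \bigl(|g_{nx}|_2^2 - b|g_n|_r^r\bigr) - \delta_0.
\]

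To invoke $\Theta(s,t)\le H(\cdot,\cdot)$ the competitor must lie in $\Sigma_{s,t}$. Because the definition of minimizing sequence gives only $|g_n|_2^2\to t$, I would renormalize $\tilde g_n := (t/|g_n|_2^2)^{1/2} g_n$; the $H^1$-boundedness of $\{g_n\}$ from Lemma~\ref{Ibounded} together with the Sobolev embedding $H^1\hookrightarrow L^r\cap L^{2q}$ makes $H(\psi_{\sigma_*},\tilde g_n) - H(\psi_{\sigma_*}, g_n) = o(1)$. Minimality therefore gives
\[
\Theta(s,t) \le H(\psi_{\sigma_*}, \tilde g_n) \le \bigl(|g_{nx}|_2^2 - b|g_n|_r^r\bigr) - \delta_0 + o(1).
\]
Since $|g_{nx}|_2^2 - b|g_n|_r^r = H(f_n,g_n) - A_n$ and $H(f_n,g_n)\to\Theta(s,t)$, subtracting gives $A_n \le -\delta_0 + o(1)$, hence $A_n \le -\delta_0/2 =: -\delta_1$ for all large $n$. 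Running the same argument with a fixed bump $\phi_0$ of mass $t$ and the exponent condition $r<6$ yields the analogous bound on $B_n := |g_{nx}|_2^2 - b|g_n|_r^r - c|f_n g_n|_q^q$.

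The main obstacle is conceptual rather than computational: the instinctive family of joint rescalings $(f_n,g_n)\mapsto(\sigma^{1/2}f_n(\sigma\cdot),\sigma^{1/2}g_n(\sigma\cdot))$ produces only one scalar equation in the limit (the derivative of $H$ at $\sigma=1$ must vanish) and so cannot separate the two components. The remedy is to allow one component to be replaced by an entirely different stretched profile while keeping the other fixed; the subcritical condition $p,r<6$ enters precisely at the point where $\sigma^{(p-2)/2}$ (respectively $\sigma^{(r-2)/2}$) must dominate $\sigma^2$ as $\sigma\to 0^+$. The mass renormalization of $g_n$ (and later $f_n$) to land exactly in $\Sigma_{s,t}$ is a harmless technicality absorbed into the $o(1)$ term via $H^1$-boundedness.
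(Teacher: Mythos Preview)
Your proof is correct and rests on the same core construction as the paper's --- replacing one component by a stretched fixed profile $\psi_\sigma$ and using the subcritical scaling $(p-2)/2<2$ (resp.\ $(r-2)/2<2$) to make the kinetic term lose to the potential term as $\sigma\to 0^+$. The difference is in how the other component is handled. The paper argues by contradiction: assuming $\liminf A_n\ge 0$ it first deduces $\Theta(s,t)\ge J(g_t)$, where $g_t$ is the single-component minimizer supplied by Lemma~\ref{minforJ}, and then pairs the stretched profile with that minimizer $g_t$ to obtain the strict inequality $\Theta(s,t)\le H(\psi_\theta,g_t)<J(g_t)$. You instead pair $\psi_{\sigma_*}$ directly with the (renormalized) $g_n$ from the given minimizing sequence and subtract, reaching $A_n\le -\delta_0+o(1)$ without ever invoking Lemma~\ref{minforJ}. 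Your route is therefore slightly more elementary --- it does not require knowing that the single-component problem is attained --- at the cost of the small renormalization step $\tilde g_n=(t/|g_n|_2^2)^{1/2}g_n$, which is indeed harmless by the $H^1$-boundedness from Lemma~\ref{Ibounded}.
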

\begin{proof} Both inequalities can be proved by using similar arguments. We only prove the first inequality.
Suppose the conclusion is false. Then, by passing to a
subsequence if necessary, we may assume that there exists a minimizing sequence
$(f_n,g_n)$ for which
\begin{equation}
\liminf_{n \to \infty} \left(|f_{nx}|_{2}^2 -a|f_n|_{p}^{p}
-c|f_ng_n|_{q}^q\right) \ge 0,
\end{equation}
and so
\begin{equation}
\Theta(s,t) = \lim_{n\to\infty}H(f_n,g_n) \ge \liminf_{n \to \infty}
\int_{-\infty}^\infty\left(|g_{nx}|^2 - b |g_n|^{r}\right)\
dx. \label{IgeJg0}
\end{equation}

Define $J$ and $g_t$ as in Lemma \ref{minforJ} with $h=g, \beta=b,$ and $\alpha=r-1.$  Then
\eqref{IgeJg0} implies that
\begin{equation}
\Theta(s,t) \ge J(g_t). \label{IgeJg02}
\end{equation} On the other hand, take any $f \in H^1$ such that $\|f\|^2=s$ and
\begin{equation}\label{minf2}
|f_{x}|_{2}^2 -a|f|_{p}^{p}
-c|fg_t|_{q}^q < 0.
\end{equation}
To construct such a function $f,$ take an arbitrary smooth, non-negative function $\psi$ with compact support
such that $\psi(0)=1$ and $\|\psi\|=s,$ and for $\theta > 0,$ define $\psi_{\theta}(x)=\theta^{1/2}\psi(\theta x).$
Then, $f=\psi_{\theta}$ satisfies \eqref{minf2} for sufficiently small $\theta.$
Therefore,
\begin{equation}
\Theta(s,t) \le H(f,g_t) \le |f_{x}|_{2}^2 -a|f|_{p}^{p}
-c|fg_t|_{q}^q +J(g_t) < J(g_t),
\end{equation}
which contradicts \eqref{IgeJg02}, and hence lemma follows.
\end{proof}

\begin{lem} \label{posmin}
$H(|f|,|g|)\leq H(f,g)$ for all $(f,g)\in Y$.
\end{lem}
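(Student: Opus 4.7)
The plan is to split $H$ into its kinetic and potential parts and observe that only the kinetic part can change under $(f,g)\mapsto(|f|,|g|)$. Indeed, the potential terms $a|f|_p^p + b|g|_r^r + c|fg|_q^q$ depend only on the pointwise values of $|f|$ and $|g|$, and $||f|| = |f|$, $||g|| = |g|$, $||f|\cdot|g|| = |fg|$, so these quantities are invariant under the replacement. Thus the lemma reduces to the two pointwise-in-the-component inequalities
\begin{equation*}
\bigl|(|f|)_x\bigr|_2 \le |f_x|_2 \quad\text{and}\quad \bigl|(|g|)_x\bigr|_2 \le |g_x|_2
\end{equation*}
for $f,g\in H^1(\mathbb R)$.

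The first inequality is a one-dimensional instance of the diamagnetic (Kato) inequality, and the natural way to establish it is through a regularization argument. For $\epsilon>0$ define the smoothed modulus $\rho_\epsilon=\sqrt{|f|^2+\epsilon^2}$. Writing $f=f_1+if_2$ with real $f_1,f_2\in H^1(\mathbb R)$, the chain rule (now legitimate because $\rho_\epsilon$ is bounded below by $\epsilon>0$) yields
\begin{equation*}
(\rho_\epsilon)_x \;=\; \frac{f_1(f_1)_x+f_2(f_2)_x}{\rho_\epsilon} \;=\; \operatorname{Re}\!\left(\frac{\bar f\,f_x}{\rho_\epsilon}\right),
\end{equation*}
so the Cauchy--Schwarz inequality gives the pointwise bound $|(\rho_\epsilon)_x|\le |f||f_x|/\rho_\epsilon \le |f_x|$ almost everywhere, and integrating yields $|(\rho_\epsilon)_x|_2\le |f_x|_2$.

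To pass to the limit as $\epsilon\to 0^+$, note that $\rho_\epsilon\to |f|$ pointwise and in $L^2$ (by dominated convergence, since $|\rho_\epsilon-|f||\le\epsilon$), while $\{(\rho_\epsilon)_x\}$ is bounded in $L^2$ by the preceding estimate. Extract a subsequence $(\rho_{\epsilon_k})_x\rightharpoonup h$ weakly in $L^2$; the distributional identity $\int \rho_{\epsilon_k}\varphi_x\,dx = -\int (\rho_{\epsilon_k})_x\varphi\,dx$ for $\varphi\in C_c^\infty(\mathbb R)$ passes to the limit to give $\int |f|\varphi_x\,dx = -\int h\varphi\,dx$, so $h=(|f|)_x$ distributionally, showing $|f|\in H^1$. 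Weak lower semicontinuity of the $L^2$ norm then gives $|(|f|)_x|_2 \le \liminf_k |(\rho_{\epsilon_k})_x|_2 \le |f_x|_2$, as required. Applying the same argument to $g$ and adding gives $H(|f|,|g|) \le H(f,g)$.

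The whole argument is essentially standard, so there is no genuine obstacle; the only point requiring care is the regularization step, since $t\mapsto\sqrt{t}$ is not smooth at $0$ and the naive chain rule for $|f|=\sqrt{|f|^2}$ fails on the zero set of $f$. Replacing $|f|$ by $\rho_\epsilon$ and then taking a weak limit cleanly circumvents this issue.
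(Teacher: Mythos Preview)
Your proof is correct and follows the same overall line as the paper: both observe that the potential terms are unchanged and reduce the lemma to the kinetic inequality $\bigl|(|f|)_x\bigr|_2 \le |f_x|_2$. The only difference is in how that inequality is justified---the paper merely cites a Fourier-transform argument from \cite{[ABS]}, whereas you give a self-contained regularization (diamagnetic/Kato) proof; this is a minor technical variation rather than a genuinely different route.
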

\begin{proof}
The proof follows from the fact that if $f\in H^{1},$ then $%
\left\vert f(x)\right\vert $ is in $H^{1}$ and%
\begin{equation}\label{decrea2}
\int_{-\infty }^{\infty }\left\vert \left\vert f\right\vert _{x}\right\vert
^{2}\ dx\leq \int_{-\infty }^{\infty }\left\vert f_{x}\right\vert ^{2}\ dx.
\end{equation}
A proof of \eqref{decrea2} can be given by working with Fourier transforms of $f$ and $|f|$ and
is easily constructed by adapting the proof of Lemma~3.5 in \cite{[ABS]}.
\end{proof}

In the sequel, we denote by $e^\ast(x)$ the symmetric decreasing rearrangement for a function
$e:\mathbb{R} \to [0,\infty).$ We refer the reader to \cite{[LL]} for details
about symmetric decreasing rearrangements. We note here that if $(f,g)\in Y,$ then $|f|, |g| \in Y,$ and hence
symmetric rearrangements $|f|^\ast$ and $|g|^\ast$ of $|f|$ and $|g|$ are well-defined.
A basic property about symmetric decreasing rearrangement is that $L^p$ norms are preserved:
\begin{equation}\label{Lpnormpreserve}
\int_{-\infty }^{\infty }(|f|^\ast)^{p}\ dx = \int_{-\infty }^{\infty }|f|^{p}\ dx
\end{equation}

\begin{lem}\label{symmmin}
$H(|f|^\ast,|g|^\ast)\leq H(f,g)$ for all $(f,g)\in Y.$
\end{lem}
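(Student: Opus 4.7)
The plan is to reduce to the nonnegative case first, then handle the four pieces of $H$ separately using standard properties of symmetric decreasing rearrangement.

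First, by Lemma~\ref{posmin} we have $H(|f|,|g|) \leq H(f,g)$, so it suffices to prove the inequality when $f,g$ are nonnegative; equivalently, we may replace $(f,g)$ by $(|f|,|g|)$ and assume $f=|f|$ and $g=|g|$. From now on, I would treat $f$ and $g$ as nonnegative functions in $H^1(\mathbb{R})$ and compare each of the four terms of $H(f^\ast,g^\ast)$ with the corresponding term of $H(f,g)$.

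For the two kinetic terms, I would invoke the P\'olya--Szeg\H{o} inequality for symmetric decreasing rearrangement on the line, which gives
\begin{equation*}
\int_{-\infty}^{\infty}|(f^\ast)_x|^{2}\,dx \leq \int_{-\infty}^{\infty}|f_x|^{2}\,dx,\qquad \int_{-\infty}^{\infty}|(g^\ast)_x|^{2}\,dx \leq \int_{-\infty}^{\infty}|g_x|^{2}\,dx.
\end{equation*}
For the two pure-power terms, I would use the equimeasurability property \eqref{Lpnormpreserve}, which yields $|f^\ast|_{p}^{p}=|f|_{p}^{p}$ and $|g^\ast|_{r}^{r}=|g|_{r}^{r}$, so those terms are unchanged. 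The only substantive point is the coupling term, and this is the step I expect to be the main obstacle; for it I would appeal to the Hardy--Littlewood rearrangement inequality applied to the nonnegative functions $f^{q}$ and $g^{q}$, together with the identity $(h^{q})^{\ast}=(h^{\ast})^{q}$ valid for nonnegative $h$. Explicitly,
\begin{equation*}
|fg|_{q}^{q}=\int_{-\infty}^{\infty}f^{q}(x)\,g^{q}(x)\,dx \leq \int_{-\infty}^{\infty}(f^{q})^{\ast}(x)\,(g^{q})^{\ast}(x)\,dx =\int_{-\infty}^{\infty}(f^{\ast})^{q}(g^{\ast})^{q}\,dx=|f^\ast g^\ast|_{q}^{q}.
\end{equation*}

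Combining the three observations, each negative term $-a|f|_{p}^{p}$, $-b|g|_{r}^{r}$, $-c|fg|_{q}^{q}$ becomes no larger after rearrangement (because $a,b,c>0$), while each kinetic term becomes no larger as well. Putting these together gives $H(f^\ast,g^\ast)\leq H(f,g)$ in the nonnegative case, and the reduction via Lemma~\ref{posmin} then yields the general statement $H(|f|^{\ast},|g|^{\ast})\leq H(f,g)$.
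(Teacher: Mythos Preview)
Your proof is correct and follows essentially the same approach as the paper: reduce to the nonnegative case via Lemma~\ref{posmin}, use equimeasurability \eqref{Lpnormpreserve} for the pure-power terms, the P\'olya--Szeg\H{o} inequality for the kinetic terms, and the Hardy--Littlewood rearrangement inequality for the coupling term. The paper packages the last two steps as citations to \cite{[LL]} (Lemma~7.17 and Theorem~3.4, respectively), but the content is identical to what you wrote.
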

\begin{proof}
From Theorem 3.4 of \cite{[LL]}, we have
\begin{equation}\label{Lppreserve2}
\int_{-\infty }^{\infty }(|f|^\ast) ^{q}(|g|^\ast)^{q} \ dx \geq
\int_{-\infty }^{\infty }|f|^{q}|g|^{q}\ dx.
\end{equation}
Lemma 7.17 of \cite{[LL]} implies that
\begin{equation}\label{Lppreserve3}
\int_{-\infty }^{\infty }| (|f|^\ast)_{x}| ^{2}\ dx\leq
\int_{-\infty }^{\infty }||f|_{x}|^{2}\ dx,
\end{equation}
and similarly for $g(x)$. Then, the claim follows by using the
facts \eqref{Lpnormpreserve}, \eqref{Lppreserve2}, \eqref{Lppreserve3}, and
Lemma \ref{posmin}.
\end{proof}

The next lemma is one-dimensional version of Proposition 1.4 of \cite{[By]}. A proof of this lemma is given in \cite{[AB11]} (see also \cite{[Gar]}).

\begin{lem}  Suppose $u, v:\mathbb{R}\to [0,\infty)$ are even, $C^\infty,$
non-increasing, and have compact support in $\mathbb R.$  Let $a_1$ and $a_2$ be real numbers such that
$\text{supp}(u(x+a_1))\cap \text{supp}(v(x+a_2))=\emptyset,$
and define
$$
e(x)=u(x+a_1)+v(x+a_2).
$$
Then the derivative $(e^\ast)'$ of
$e^\ast$ (in sense of distribution) is in $L^2$, and satisfies
\begin{equation}
\|(e^\ast)'\|^2 \le \|e'\|^2 - \frac34 \min\{\|u'\|^2,\|v'\|^2\}.
\label{garineq}
\end{equation}
\label{garlem}
\end{lem}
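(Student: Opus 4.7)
The plan is to represent both $\|e'\|^2$ and $\|(e^\ast)'\|^2$ through a coarea (layer--cake) identity and compare them level by level. First assume without loss of generality that $\|u'\|^2 \le \|v'\|^2$, and write $M_u=u(0)$, $M_v=v(0)$. Because the translates $u(\cdot+a_1)$ and $v(\cdot+a_2)$ have disjoint supports, the distribution functions $\mu_h(t):=|\{h>t\}|$ combine additively: $\mu_e(t)=\mu_u(t)+\mu_v(t)$ for all $t>0$, and by the same disjointness $\|e'\|^2 = \|u'\|^2+\|v'\|^2$.

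Next I will invoke the identity
\[
\|h'\|^2 = \int_0^{h(0)} \frac{4}{-\mu_h'(t)}\, dt,
\]
valid for any even, non-increasing $h\in H^1$ that is smooth and strictly decreasing from its peak on its support; this identity is a direct change of variables $x\mapsto h(x)$ on each half-line, and reflects that each regular level set of $h$ consists of two points. Applying this to $u$, $v$, and also to $e^\ast$ (whose super-level sets are the centered intervals of length $\mu_e(t)$), setting $a(t):=-\mu_u'(t)$ and $b(t):=-\mu_v'(t)$, and observing that one of $a,b$ vanishes on $(M_u\wedge M_v,\,M_u\vee M_v)$ so that its contribution cancels in the difference, one obtains
\[
\|e'\|^2 - \|(e^\ast)'\|^2 = \int_0^{M_u\wedge M_v}\!\!\left(\frac{4}{a}+\frac{4}{b}-\frac{4}{a+b}\right)dt.
\]

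The crux is then the elementary pointwise inequality: for every $a,b>0$,
\[
\frac{4}{a}+\frac{4}{b}-\frac{4}{a+b} \;\ge\; \frac{3}{a}
\quad\text{and, symmetrically,}\quad
\ge \frac{3}{b},
\]
each of which reduces after clearing denominators to the trivial $4a^2+ab+b^2\ge 0$ (respectively $a^2+ab+4b^2\ge 0$). A short case split then finishes. In the case $M_u\le M_v$, the first form yields $\|e'\|^2-\|(e^\ast)'\|^2 \ge \int_0^{M_u}3/a\,dt = \tfrac34\|u'\|^2 = \tfrac34\min\{\|u'\|^2,\|v'\|^2\}$. In the case $M_u>M_v$, the second form yields $\|e'\|^2-\|(e^\ast)'\|^2 \ge \int_0^{M_v}3/b\,dt = \tfrac34\|v'\|^2 \ge \tfrac34\|u'\|^2$, again matching the desired bound via the WLOG ordering.

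The main point of technical care will be justifying the coarea identity for $e^\ast$: at $t=M_u\wedge M_v$ the number of connected components of $\{e>t\}$ drops from two to one, so $\mu_e'$ has a jump discontinuity and $e^\ast$ picks up a corner on the level $\{e^\ast=M_u\wedge M_v\}$. Nevertheless $e^\ast$ remains absolutely continuous with $(e^\ast)'\in L^2$, which is all that is needed for the change of variables, and the smoothness together with the compact support of $u,v$ rule out any other regularity issues.
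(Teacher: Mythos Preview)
The paper does not supply its own proof of this lemma; it merely records that the statement is the one-dimensional version of Proposition~1.4 of \cite{[By]} and refers the reader to \cite{[AB11]} for a proof. So there is no in-paper argument to compare against.

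That said, your coarea/layer-cake approach is correct and is a natural direct route to this kind of rearrangement inequality. The three ingredients you isolate all check out: (i) disjoint supports give $\mu_e=\mu_u+\mu_v$ and $\|e'\|^2=\|u'\|^2+\|v'\|^2$; (ii) for an even, non-increasing, compactly supported smooth $h$ the change of variables $x\mapsto h(x)$ on each half-line, together with Sard's theorem to dispose of critical values, gives $\|h'\|^2=\int_0^{h(0)}4/(-\mu_h'(t))\,dt$ (flat pieces of $h$ contribute nothing to either side, so ``non-increasing'' rather than ``strictly decreasing'' is enough); and (iii) the pointwise inequality $\tfrac{4}{a}+\tfrac{4}{b}-\tfrac{4}{a+b}\ge \tfrac{3}{a}$, equivalent to $4a^2+ab+b^2\ge 0$, together with its symmetric counterpart, yields the $3/4$ gain after your case split on $M_u$ versus $M_v$. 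The only genuinely delicate point is the one you already flag: applying the level-set identity to $e^\ast$, which is not $C^\infty$. But since $\mu_{e^\ast}=\mu_u+\mu_v$ is piecewise smooth with at most a slope discontinuity at $t=M_u\wedge M_v$ and jumps at the (measure-zero) set of flat levels of $u$ and $v$, the inverse $e^\ast$ is piecewise $C^1$ with $(e^\ast)'\in L^2$ by P\'olya--Szeg\H{o}, and the change-of-variables identity holds on each smooth piece; the flat pieces of $e^\ast$ contribute zero to $\|(e^\ast)'\|^2$ and correspond to a null set of $t$-values.
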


\smallskip

The next lemma proves that $\Theta(s,t)$ is subadditive:

\begin{lem} \label{subadd}
Let $s_1,s_2,t_1,t_2\geq 0$ be such that
$s_1+s_2>0$, $t_1+t_2>0$, $s_1+t_1>0$, and $s_2+t_2>0$. Then
\begin{equation}
\label{SUBA}
\Theta(s_{1}+s_{2},t_{1}+t_{2})<\Theta(s_{1},t_{1})+\Theta(s_{2},t_{2}).
\end{equation}
\end{lem}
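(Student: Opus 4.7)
The strategy is to build competitors for the variational problem on the left of \eqref{SUBA} from suitably translated near-minimizers of the two problems on the right, and then to extract a \emph{strict} gain by passing to the symmetric decreasing rearrangement, exploiting Lemma \ref{garlem}.

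In the main case $s_1,s_2,t_1,t_2>0$, fix $\epsilon>0$ and, for each $i=1,2$, choose a pair $(\phi_i,\psi_i)\in\Sigma_{s_i,t_i}$ with $H(\phi_i,\psi_i)<\Theta(s_i,t_i)+\epsilon$ whose components are non-negative, even, $C^\infty$, non-increasing on $[0,\infty)$, and compactly supported. Such pairs can be produced by taking any approximate minimizer, replacing it first by the pair of moduli (using Lemma \ref{posmin}) and then by the pair of symmetric decreasing rearrangements (using Lemma \ref{symmmin}), and finally truncating and mollifying while renormalizing to restore the $L^2$-constraints. From Lemma \ref{bdgnxbelow} one has lower bounds $\|\phi_i'\|,\|\psi_i'\|\ge c_i>0$ as soon as $\epsilon$ is small. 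Next, pick $a_1,a_2\in\mathbb{R}$ with $|a_1-a_2|$ so large that $\operatorname{supp}\phi_1(\cdot-a_1)\cup\operatorname{supp}\psi_1(\cdot-a_1)$ is disjoint from $\operatorname{supp}\phi_2(\cdot-a_2)\cup\operatorname{supp}\psi_2(\cdot-a_2)$, and set
\[
f=\phi_1(\cdot-a_1)+\phi_2(\cdot-a_2),\qquad g=\psi_1(\cdot-a_1)+\psi_2(\cdot-a_2).
\]
By disjointness $(f,g)\in\Sigma_{s_1+s_2,t_1+t_2}$ and the cross terms split cleanly, giving $H(f,g)=H(\phi_1,\psi_1)+H(\phi_2,\psi_2)$. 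Apply Lemma \ref{garlem} to $f$ (with $u=\phi_1,v=\phi_2$) and to $g$ (with $u=\psi_1,v=\psi_2$); combining with \eqref{Lpnormpreserve} (pure-power terms preserved) and \eqref{Lppreserve2} (mixed term only increases under rearrangement) yields
\[
H(f^*,g^*)\le H(f,g)-\tfrac34\bigl(\min\{\|\phi_1'\|^2,\|\phi_2'\|^2\}+\min\{\|\psi_1'\|^2,\|\psi_2'\|^2\}\bigr).
\]
Setting $\kappa:=\tfrac34\bigl(\min\{\|\phi_1'\|^2,\|\phi_2'\|^2\}+\min\{\|\psi_1'\|^2,\|\psi_2'\|^2\}\bigr)>0$, and noting that $(f^*,g^*)\in\Sigma_{s_1+s_2,t_1+t_2}$, one gets
\[
\Theta(s_1+s_2,t_1+t_2)\le H(f^*,g^*)\le\Theta(s_1,t_1)+\Theta(s_2,t_2)+2\epsilon-\kappa,
\]
and \eqref{SUBA} follows by taking $\epsilon<\kappa/2$.

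The main obstacle is the degenerate boundary configurations allowed by the hypotheses: either (A) exactly one of $\{s_1,s_2,t_1,t_2\}$ equals $0$, or (B) the "diagonal" pattern $s_1=t_2=0$ with $s_2,t_1>0$ (or its mirror image $s_2=t_1=0$). In case (A), say $s_1=0$, the sum $f$ is a single translate of $\phi_2$ but $g$ is still a two-bump sum, so the argument goes through verbatim with Lemma \ref{garlem} applied only to $g$; positivity of $\min\{\|\psi_1'\|^2,\|\psi_2'\|^2\}$ is ensured because $\psi_1$ can be taken as an exact sech-type minimizer of the single-constraint problem $\Theta(0,t_1)$ produced by Lemma \ref{minforJ}, while $\|\psi_2'\|$ is bounded below via Lemma \ref{bdgnxbelow}. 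In case (B), rearrangement produces no gain because neither component has two bumps; instead I would \emph{not} separate the pairs but superpose them. Using the exact strictly positive sech minimizers $\phi_2$ of $\Theta(s_2,0)$ and $\psi_1$ of $\Theta(0,t_1)$ supplied by Lemma \ref{minforJ}, the pair $(\phi_2,\psi_1)\in\Sigma_{s_2,t_1}$ satisfies
\[
H(\phi_2,\psi_1)=\Theta(s_2,0)+\Theta(0,t_1)-c|\phi_2\psi_1|_q^q,
\]
and $|\phi_2\psi_1|_q^q>0$ because both factors are positive everywhere, which yields \eqref{SUBA} directly. The remaining one-zero and mirror-diagonal configurations follow by symmetry.
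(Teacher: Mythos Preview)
Your argument is correct and mirrors the paper's proof: disjointly translate two near-minimizers (chosen non-negative, symmetric-decreasing, smooth, compactly supported), rearrange the sums, extract the strict kinetic drop from Lemma~\ref{garlem} with positivity of the minimum guaranteed by Lemma~\ref{bdgnxbelow}, and handle the diagonal degeneracy via the exact one-component minimizers of Lemma~\ref{minforJ} together with strict positivity of the coupling integral $c|\phi_2\psi_1|_q^q$. One small slip to fix: in your case~(A) you take $\psi_1$ to be the \emph{exact} sech minimizer, which does not have compact support, so Lemma~\ref{garlem} is not literally applicable---just truncate and mollify it as in your main case and invoke Lemma~\ref{bdgnxbelow} (which is valid for $s=0$, $t>0$) for the required lower bound on $\|\psi_1'\|$.
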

\begin{proof}  Let $i=1,2.$ Then, following closely the arguments used in \cite{[AB11]},
we can choose
minimizing sequences
$(f_n^{(i)},g_n^{(i)})$ for $\Theta(s_i,t_i)$ such that $f_n^{(i)}$ and $g_n^{(i)}$
are real-valued, non-negative, even, $C^\infty$ with compact support in $\mathbb{R},$ non-increasing on $\{x:x\ge 0\},$ and satisfy
$(f_n^{(i)}, g_n^{(i)})\in \Sigma_{s_i,t_i}.$

\smallskip

Now, for each each $n,$ choose a number $x_n$
such that $f_n^{(1)}(x)$ and $\tilde f_n^{(2)}(x)= f_n^{(2)}(x+x_n)$ have disjoint support, and
$g_n^{(1)}(x)$ and $\tilde g_n^{(2)}(x)=g_n^{(2)}(x+x_n)$ have disjoint support.  Define
\begin{equation*}
\begin{aligned}
f_n &= \left(f_n^{(1)} + \tilde f_n^{(2)}\right)^\ast \ \text{and} \
g_n &= \left(g_n^{(1)} + \tilde g_n^{(2)}\right)^\ast.
\end{aligned}
\end{equation*}
Then $(f_n,g_n) \in \Sigma_{s_1+s_2,t_1+t_2},$ and hence,
\begin{equation}
\Theta(s_1+s_2,t_1 + t_2) \le H(f_n,g_n).
\label{IleE}
\end{equation}
On the other hand, from Lemma \ref{garlem} we have that
\begin{equation}
\begin{aligned}
\int_{-\infty}^\infty &{\left( f_{nx}^2 + g_{nx}^2 \right)\ dx}
\le
\int_{-\infty}^\infty {\left( (f_n^{(1)}+\tilde f_n^{(2)})_x^2 + (g_n^{(1)}+\tilde g_n^{(2)})_x^2 \right)\ dx} - K_n\\
 &=\int_{-\infty}^\infty {\left( (f_{nx}^{(1)})^2+(\tilde f_{nx}^{(2)})^2 + (g_{nx}^{(1)})^2+(\tilde g_{nx}^{(2)})^2 \right)\ dx} - K_n,
\end{aligned}
\label{kinenstrictdec}
\end{equation}
where
\begin{equation}\label{defKn}
K_n = \frac34
\left(\min\left\{\|f_{nx}^{(1)}\|^2,\|f_{nx}^{(2)}\|^2\right\}
+\min\left\{\|g_{nx}^{(1)}\|^2,\|g_{nx}^{(2)}\|^2\right\}\right).
\end{equation}
Moreover, from the properties of rearrangements, we have that
\begin{equation} \label{rear1}
\begin{aligned}
\int_{-\infty}^\infty |f_{n}|^{p}\ dx &=\int_{-\infty}^\infty |f_{n}^{(1)}|^{p}\ dx+\int_{-\infty}^\infty |f_{n}^{(2)}|^{p}\ dx ,\\
\int_{-\infty}^\infty |g_{n}|^{r}\ dx &=\int_{-\infty}^\infty |g_{n}^{(1)}|^{r}\ dx+\int_{-\infty}^\infty |g_{n}^{(2)}|^{r}\ dx ,\\
\int_{-\infty}^\infty |f_n|^q|g_n|^{q}\ dx &\ge \int_{-\infty}^\infty |f_n^{(1)}|^q|g_n^{(1)}|^{q}\ dx + \int_{-\infty}^\infty |f_n^{(2)}|^q | g_n^{(2)}|^q\ dx.
\end{aligned}
\end{equation}
Then, \eqref{IleE}, \eqref{kinenstrictdec} and \eqref{rear1} give, for all $n,$
\begin{equation}
\Theta(s_1+t_1, s_2+t_2) \le H(f_n,g_n) \le H(f_n^{(1)},g_n^{(1)}) + H(f_n^{(2)}, g_n^{(2)}) - K_n.
\end{equation}
Hence, we obtain
\begin{equation}
\Theta(s_1+t_1,s_2+t_2) \le \Theta(s_1,t_1)+\Theta(s_2,t_2) - \liminf_{n \to \infty} K_n.
\label{subaddKn}
\end{equation}
Since $t_1 + t_2 >0$, we
consider the following three cases: (i) $t_1
> 0$ and $t_2>0$; (ii) $t_1 = 0$, $t_2 > 0$, and $s_2 > 0$; and (iii)
$t_1 = 0$, $t_2 >0$, and $s_2=0$.

\smallskip

\textit{Case 1:} When $t_1 > 0$ and $t_2>0.$
Lemma
\ref{bdgnxbelow} guarantees that there exist numbers $\delta_1 >0$ and
$\delta_2 >0$ such that for all sufficiently large $n$,
\begin{equation*}
\|(g_n^{(1)})_x\| \ge \delta_1 \ \text{and}\ \|(g_n^{(2)})_x\| \ge
\delta_2.
\end{equation*}
Let $\delta =
\min(\delta_1, \delta_2)> 0$. Then, \eqref{defKn} gives $K_n \ge
3\delta/4$ for all sufficiently large $n$. From \eqref{subaddKn}
we have
\begin{equation*}
\Theta(s_1+t_1,s_2+t_2) \le \Theta(s_1,t_1)+\Theta(s_2,t_2) - 3\delta/4 <
\Theta(s_1,t_1)+\Theta(s_2,t_2), \label{subaddwdelta}
\end{equation*} as
desired.

\smallskip

\textit{Case 2:} When $t_1 = 0, t_{2} >0,$ and $s_2>0.$ Since $s_{1}+t_{1}>0, s_{1}>0$ too.
By Lemma
\ref{bdgnxbelow}, there exist numbers $\delta_3 >0$ and
$\delta_4 >0$ such that for all sufficiently large $n$,
\begin{equation*}
\|(f_n^{(1)})_x\| \ge \delta_3 \ \text{and}\ \|(f_n^{(2)})_x\| \ge
\delta_4.
\end{equation*}
Let $\delta =
\min(\delta_3, \delta_4)> 0$. Then, \eqref{defKn} gives $K_n \ge
3\delta/4$ for all sufficiently large $n$. From \eqref{subaddKn}
we have
\begin{equation*}
\Theta(s_1+t_1,s_2+t_2) \le \Theta(s_1,t_1)+\Theta(s_2,t_2) - 3\delta/4 <
\Theta(s_1,t_1)+\Theta(s_2,t_2). \label{subaddwdelta}
\end{equation*}

\smallskip

\textit{Case 3:} When $t_1 = 0, t_{2} >0,$ and $s_2=0.$ In this case, we have
\begin{equation*}
\Theta(0,t_{2})=\inf \left\{\int_{-\infty }^{\infty
}\left(|g_{x}|^{2}-b|g|^{r}\right)\ dx: g\in H^{1} \ \text{and}\ \|g\|^{2}=t_{2}>0 \right\}
\end{equation*}
and
\begin{equation*}
\Theta(s_{1},0)=\inf \left\{\int_{-\infty }^{\infty
}\left(|f_{x}|^{2}-a|f|^{p}\right)\ dx: f\in H^{1} \ \text{and}\ \|f\|^{2}=s_{1}>0 \right\}.
\end{equation*}
Lemma~\ref{minforJ} with $h=g, s=t_{2}, \beta=b,$ and $\alpha=r-1$ implies $\Theta(0,t_{2})=J(g_{t_2}).$ Similarly, let $f_{s_1}$
be such that $\Theta(s_{1},0)=J(f_{s_1}).$ Clearly,
\begin{equation*}
\int_{-\infty }^{\infty}|f_{s_1}|^q |g_{t_2}|^q\ dx >0
\end{equation*}
and so
\begin{equation*}
\begin{aligned}
\Theta(s_{1},t_{2})\leq H(f_{s_{1}},g_{t_{2}})&=\Theta(s_{1},0)+\Theta(0,t_{2})-c\int_{-\infty}^{\infty}|f_{s_{1}}|^{q} |g_{t_{2}}|^{q}\ dx \\
&<\Theta(s_{1},0)+\Theta(0,t_{2}).
\end{aligned}
\end{equation*}
This completes the proof of lemma.
\end{proof}

\begin{lem}\label{GAst}
Suppose $\gamma =s+t$ and let $\{(f_n,g_n)\}$ be a minimizing sequence for $\Theta(s,t).$ Then there exists a sequence of real numbers $%
\{y_{n}\}$ such that

\smallskip

$1.$ for every $z<s+t$ there exists $\zeta =\zeta(z)$ such that
\begin{equation*}
\int_{y_{n}-\zeta }^{y_{n}+\zeta }(|f_n|^{2}+|g_n|^{2})\ dx > z
\end{equation*}
for all sufficiently large $n.$

\smallskip

$2.$ the sequence $\{(w_n,z_n)\}$ defined by
\begin{equation*}
w_n(x)=f_n(x+y_n)\ \text{and}\ z_n(x)=g_n(x+y_n),\ x\in \mathbb{R},
\end{equation*}
has a subsequence which converges in $Y$ norm to a function $(\Phi,\Psi) \in \mathcal{F}_{s,t}.$
In particular, $\mathcal{F}_{s,t}$ is nonempty.
\end{lem}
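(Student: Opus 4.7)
The plan is to prove the two assertions in turn, with Part 1 a concentration-compactness bookkeeping argument and Part 2 a weak compactness plus lower semicontinuity argument standard for variational problems. For Part 1, I would use that $M_n(\zeta) \to M(\zeta)$ pointwise and $M(\zeta) \to s+t$ as $\zeta \to \infty$. Fix once and for all a small $\epsilon_0>0$, choose $\zeta_0$ with $M(\zeta_0)>s+t-\epsilon_0/2$, and select centers $y_n$ so that $\int_{y_n-\zeta_0}^{y_n+\zeta_0}\rho_n\,dx>s+t-\epsilon_0$ for all sufficiently large $n$. Given any $z<s+t$, set $\epsilon=(s+t-z)/3$, pick $\zeta_\epsilon$ with $M(\zeta_\epsilon)>s+t-\epsilon/2$, and auxiliary centers $\tilde y_n$ with $\int_{\tilde y_n-\zeta_\epsilon}^{\tilde y_n+\zeta_\epsilon}\rho_n\,dx>s+t-\epsilon$. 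Since $\|\rho_n\|_{L^1}\to s+t$, if the two intervals around $y_n$ and $\tilde y_n$ were disjoint their combined mass would eventually exceed $s+t$, a contradiction; hence $|y_n-\tilde y_n|\le \zeta_0+\zeta_\epsilon$, and the choice $\zeta=\zeta_0+2\zeta_\epsilon$ makes $[y_n-\zeta,y_n+\zeta]$ contain the auxiliary interval, capturing mass greater than $z$.

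For Part 2, define $w_n(x)=f_n(x+y_n)$ and $z_n(x)=g_n(x+y_n)$. By Lemma \ref{Ibounded} the sequence $\{(w_n,z_n)\}$ is bounded in $Y$, so after extracting a subsequence I have $w_n\rightharpoonup \Phi$ and $z_n\rightharpoonup \Psi$ weakly in $H^1$. The tightness from Part 1 together with the local compact embedding of $H^1$ into $L^2$ on bounded intervals (Rellich--Kondrachov) upgrades these to strong convergence in $L^2(\mathbb{R})$; interpolation with the $H^1$ bound, using the one-dimensional Sobolev embedding $H^1\hookrightarrow L^\infty$, then gives strong $L^p$ convergence for every $p\in[2,\infty)$. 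In particular $\|\Phi\|^2=s$ and $\|\Psi\|^2=t$, so $(\Phi,\Psi)\in\Sigma_{s,t}$, and moreover $|w_n|_p^p\to|\Phi|_p^p$, $|z_n|_r^r\to|\Psi|_r^r$, and (via Cauchy--Schwarz applied to $|w_n|^q,|z_n|^q$ in $L^2$) $|w_nz_n|_q^q\to|\Phi\Psi|_q^q$. Weak $L^2$ lower semicontinuity of the derivative terms then yields $H(\Phi,\Psi)\le \liminf H(w_n,z_n)=\Theta(s,t)$, and the reverse inequality is automatic from $(\Phi,\Psi)\in\Sigma_{s,t}$, so $(\Phi,\Psi)\in\mathcal{F}_{s,t}$.

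To finish, I need to strengthen weak $H^1$ convergence to norm convergence in $Y$. Since all the potential terms converge strongly and $H(w_n,z_n)\to H(\Phi,\Psi)$, the combined gradient energy satisfies $|w_{nx}|_2^2+|z_{nx}|_2^2\to|\Phi_x|_2^2+|\Psi_x|_2^2$. The two weak lower semicontinuity inequalities $|\Phi_x|_2^2\le \liminf|w_{nx}|_2^2$ and $|\Psi_x|_2^2\le \liminf|z_{nx}|_2^2$ must therefore both be equalities in the limit, which combined with weak $L^2$ convergence of the derivatives gives strong convergence of the derivatives; together with the already-established strong $L^2$ convergence of $w_n$ and $z_n$ this delivers strong convergence in $Y$. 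The main obstacle is the bookkeeping in Part 1: the raw data $M_n\to M$ only produces, a priori, a separate center for each $z$, and consolidating them into a single universal sequence $\{y_n\}$ requires the overlap argument driven by total-mass conservation $\|\rho_n\|_{L^1}\to s+t$. Everything after that is the standard template for concentration-compactness minimization.
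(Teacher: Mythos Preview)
Your proposal is correct and follows essentially the same route as the paper. Part~1 is the identical overlap argument (the paper fixes $\zeta_0$ so that more than $(s+t)/2$ of the mass is captured, you fix it so that more than $s+t-\epsilon_0$ is captured, and both then use total-mass conservation to force overlap with the auxiliary interval); Part~2 is the same weak-compactness/tightness/Rellich template, with Gagliardo--Nirenberg in the paper playing the role of your $L^2$--$L^\infty$ interpolation, and the final upgrade to strong $Y$ convergence obtained in both cases from equality in the weak lower-semicontinuity inequality for the gradient terms. The only cosmetic difference is that the paper bundles the gradient norms as a single Hilbert-space norm on $L^2\times L^2$, whereas you separate the two components via the $\liminf$ inequality---both arguments are standard and equivalent.
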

\begin{proof}
Since $\gamma =s+t,$ then, by the definition of $\gamma ,$ there exists $%
\zeta_{0}$ such that for $n$ sufficiently large, $M_n(\zeta_0)>(s+t)/2.$
Thus, for each sufficiently large $n,$ we can find $y_{n}$ such that
\begin{equation*}
\int_{y_{n}-\zeta_{0}}^{y_{n}+\zeta
_{0}}(|f_{n}|^{2}+|g_n|^{2}) \ dx > \frac{s+t}{2}.
\end{equation*}
Now, let $z<s+t.$ Clearly, we may assume $z \in (\frac{s+t}{2}, s+t).$ Again, since $%
\gamma =s+t,$ we can find $\zeta_{1}=\zeta_1(z),$ such that for $n$ sufficiently large, $M_n(\zeta_1)>z,$ and so, we can
choose $\tilde{y}_n$ such that
\begin{equation*}
\int_{\tilde{y}_n-\zeta_{1}}^{\tilde{y}_n+\zeta_{1}
}(|f_{n}|^{2}+|g_n|^{2}) \ dx > z
\end{equation*}
for some $\tilde{y}_n \in \mathbb{R}.$ Since
$\int_{-\infty }^{\infty }(|f_{n}|^{2}+|g_n|^{2})\
dx=s+t,$
it follows that for large $n,$ the intervals $[\tilde {y}_{n}-\zeta_{1},\tilde{y}_{n}+\zeta
_{1}]$ and $[y_{n}-\zeta_{0},y_{n}+\zeta _{0}]$ must overlap.
Then, by defining $\zeta =2 \zeta_{1}+\zeta _{0},$ we have that $%
[y_{n}-\zeta ,y_{n}+\zeta ]$ contains $[\tilde{y}_{n}-\zeta _{1},\tilde{y}_{n}+\zeta
_{1}],$ and the statement $1$ follows.

\smallskip

To prove statement 2, notice first that statement 1 implies that,
for every $k\in \mathbb{N},$ there exists $\zeta _{k}\in \mathbb{R}$ such
that
\begin{equation} \label{EJ10}
\int_{-\zeta _{k}}^{\zeta _{k}}\left(  |w_{n}|
^{2}+|z_{n}|^{2}\right) \ dx>s+t-\frac{1}{k},
\end{equation}
for all sufficiently large $n.$
Since $\{(w_{n},z_{n})\}$ is bounded
uniformly in $Y,$ there exists a subsequence, denoted again by $%
\{(w_{n},z_{n})\},$ which converges weakly in $Y$ to a limit $(\Phi,\Psi)\in Y.$
Then Fatou's lemma implies that%
\begin{equation*}
\| \Phi\|^{2}+\left\Vert \Psi \right\Vert ^{2}\leq \ \underset{%
n\rightarrow \infty }{\lim \inf }\left(
\|w_{n}\|^{2}+\|z_{n}\|^{2}\right)=s+t.
\end{equation*}
Moreover, for fixed $k,\ (w_{n},z_{n})$ converges weakly in $H^{1}(-\zeta
_{k},\zeta _{k})\times H^{1}(-\zeta _{k},\zeta _{k})$ to $(\Phi,\Psi),$ and
therefore has a subsequence, denoted again by $\{(w_{n},z_{n})\},$ which
converges strongly to $(\Phi,\Psi)$ in $L^{2}(-\zeta _{k},\zeta _{k})\times
L^{2}(-\zeta _{k},\zeta _{k}).$ By a diagonalization argument, we may
assume that the subsequence has this property for every $k$ simultaneously.
It then follows from \eqref{EJ10} that%
\begin{equation*}
\| \Phi\|^{2}+\left\Vert \Psi \right\Vert ^{2} \geq \int_{-\zeta _{k}}^{\zeta _{k}}\left(|\Phi|
^{2}+|\Psi|^{2}\right) \ dx\geq s+t-\frac{1}{k}.
\end{equation*}
Since $k$ was arbitrary, we get%
\begin{equation*}
\| \Phi\|^{2}+\left\Vert \Psi \right\Vert ^{2}=s+t,
\end{equation*}
which implies that $(w_{n},z_{n})$ converges strongly to the limit $(\Phi,\Psi)$ in $%
L^{2}\times L^{2}.$

\smallskip

Next, observe that
\begin{equation*}
|z_{n}-\Psi|_{r}^{r}\leq C\|z_{n}-\Psi\|
_{1}^{1/r}\|z_{n}-\Psi\|^{(r-1)/r}\leq
C\|z_{n}-\Psi|^{(r-1)/r},
\end{equation*}
which implies $|z_{n}|_{r}^{r}\to
|\Psi| _{r}^{r}$ as $n\to \infty .$ Also,%
\begin{equation*}
|w_{n}-\Phi|_{p}^{p}\leq C\left\Vert w_{n}-\Phi\right\Vert
_{1}^{1/p}\left\Vert w_{n}-\Phi\right\Vert ^{(p-1)/p}\leq
C\left\Vert w_{n}-\Phi\right\Vert ^{(p-1)/p},
\end{equation*}
and hence $\left\vert w_{n}\right\vert _{p}^{p}\to \left\vert
\Phi\right\vert _{p}^{p}$ as $n\to \infty .$ The fact
\begin{equation*}
\lim_{n\to \infty }\int_{-\infty }^{\infty }|z_{n}|^{q}|
w_{n}|^{q}\ dx=\int_{-\infty }^{\infty }|\Psi|^{q}|\Phi|
^{q}\ dx
\end{equation*}
follows by writing
\begin{equation*}
\begin{aligned}
\int_{-\infty }^{\infty }(|z_{n}|^{q}|w_{n}|^{q}-|\Psi|^{q}|
\Phi|^{q})\ dx &= \int_{-\infty }^{\infty }|z_{n}|^{q}\left(
|w_{n}|^{q}-|\Phi|^{q}\right) \ dx \\
&+\int_{-\infty }^{\infty }(|z_{n}|^{q}-|\Psi|^{q})|\Phi|^{q}\ dx
\end{aligned}
\end{equation*}
and noting that $\{(w _{n},z_{n})\}$ is bounded in $Y.$
Therefore, by another
application of Fatou's lemma, we get%
\begin{equation}\label{tech1}
\Theta(s,t)=\lim_{n \to \infty} H(w_n,z_n) \ge H(\Phi,\Psi);
\end{equation}
whence $H(f,g)=\Theta(s,t).$ Thus $(\Phi,\Psi)\in \mathcal{F}_{s,t}.$
Finally, since equality holds in \eqref{tech1},
one has
\begin{equation*}
\lim_{n\to \infty} \left(\|w_{nx}\|
^2+\|z_{nx}\|^2\right) =
\|\Phi_x\|^2+\|\Psi_x\|^2,
\end{equation*}
so $(w_n(x),z_n(x))$ converges strongly to $(\Phi,\Psi)$
in the norm of $Y$.
\end{proof}

\smallskip

The following result, which we state here without proof, is a special case of Lemma I.1 of \cite{[L]}. For a proof, see Lemma 2.13 of \cite{[AB11]}.
\begin{lem}  \label{Lionsvanish}
Suppose $f_n$ is a bounded sequence in $H^1(\mathbb{R})$
such that, for some $R >0$,
\begin{equation}
\lim_{n \to \infty} \sup_{y \in \mathbb R} \int_{y-R}^{y+R} f_n^2\ dx = 0.
\label{vanishhypo}
\end{equation}
Then for every $k>2$,
\begin{equation*}
\lim_{n \to \infty} |f_n|_k = 0.
\end{equation*}
\end{lem}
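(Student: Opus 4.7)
The plan is a two-step argument: first deduce from the vanishing hypothesis \eqref{vanishhypo} and the $H^1$-boundedness of $\{f_n\}$ that $|f_n|_\infty\to 0$, and then observe that the desired $L^k$-decay follows instantly from the interpolation $|f|_k^k\le|f|_\infty^{k-2}|f|_2^2$ valid for $k>2$.

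For the sup-norm decay, set $\epsilon_n:=\sup_{y\in\mathbb{R}}\int_{y-R}^{y+R}|f_n|^2\,dx$, which tends to $0$ by hypothesis, and let $M:=\sup_n\|f_n\|_{H^1(\mathbb{R})}<\infty$. Fix an arbitrary $y\in\mathbb{R}$ and write $I=[y-R,y+R]$. Since in one dimension every element of $H^1(\mathbb{R})$ has a continuous representative, a mean-value argument yields some $y_0\in I$ with
\begin{equation*}
|f_n(y_0)|^2\le\frac{1}{2R}\int_I|f_n|^2\,dx\le\frac{\epsilon_n}{2R}.
\end{equation*}
Using $|f_n(y)|^2-|f_n(y_0)|^2=2\operatorname{Re}\int_{y_0}^{y}\overline{f_n(x)}\,f_n'(x)\,dx$ and Cauchy-Schwarz on $I$,
\begin{equation*}
|f_n(y)|^2\le\frac{\epsilon_n}{2R}+2\Bigl(\int_I|f_n|^2\,dx\Bigr)^{1/2}\Bigl(\int_I|f_n'|^2\,dx\Bigr)^{1/2}\le\frac{\epsilon_n}{2R}+2\sqrt{\epsilon_n}\,M.
\end{equation*}
Since the right-hand side is independent of $y$ and tends to $0$, we conclude $|f_n|_\infty\to 0$.

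With that in hand, for every $k>2$ the elementary estimate
\begin{equation*}
|f_n|_k^k=\int_{-\infty}^{\infty}|f_n|^{k-2}\,|f_n|^2\,dx\le|f_n|_\infty^{k-2}\,|f_n|_2^{2}\le|f_n|_\infty^{k-2}\,M^2\longrightarrow 0
\end{equation*}
completes the proof.

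The only step demanding real thought is the sup-norm decay, and the main obstacle there is recognising that pointwise control of $f_n$ at an arbitrary point $y$ can be obtained from its average on a window of fixed width $2R$ centred at $y$: a single application of a one-dimensional Gagliardo-Nirenberg-type inequality on the window converts local $L^2$-smallness (provided by the vanishing hypothesis) and the uniform bound on $\|f_n'\|_2$ (provided by the $H^1$-boundedness) into uniform pointwise smallness of $f_n$.
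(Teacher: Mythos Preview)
Your proof is correct. The mean-value/fundamental-theorem argument cleanly converts the local $L^2$ smallness into uniform $L^\infty$ smallness, and the interpolation $|f|_k^k\le |f|_\infty^{k-2}|f|_2^2$ then finishes the job.

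As for comparison with the paper: the paper does not actually prove this lemma. It states the result without proof, citing Lions' concentration-compactness paper \cite{[L]} and Lemma~2.13 of \cite{[AB11]}. The standard argument in those references is somewhat different from yours: one covers $\mathbb{R}$ by intervals of length $2R$ with bounded overlap, applies a local Gagliardo--Nirenberg inequality on each interval to estimate $|f_n|_{L^k(I)}^k$ by a product of $|f_n|_{L^2(I)}^{\theta}$ and $\|f_n\|_{H^1(I)}^{k-\theta}$ for suitable $\theta>0$, and then sums over the cover. That route has the advantage of working in any dimension (where $H^1$ does not embed into $L^\infty$). Your argument, by contrast, exploits the one-dimensional embedding $H^1(\mathbb{R})\hookrightarrow L^\infty(\mathbb{R})$ to get the stronger intermediate conclusion $|f_n|_\infty\to 0$, which makes the final step completely elementary and avoids any covering argument. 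For the setting of this paper (one space dimension), your proof is self-contained and arguably cleaner.
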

We can now rule out the case of vanishing:
\begin{lem}\label{GAvanish}
For any minimizing sequence $\{(f_{n},g_{n})\}\in Y,\ \gamma > 0.$
\end{lem}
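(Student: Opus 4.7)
The plan is to derive a contradiction from the assumption $\gamma = 0$ by combining the vanishing lemma (Lemma~\ref{Lionsvanish}) with the strict negativity of $\Theta(s,t)$ established in Lemma~\ref{Ibounded}.

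First, I would observe that if $\gamma = 0$, then by the definition of $\gamma$ and the fact that $M(\zeta)$ is the pointwise limit of $M_n(\zeta)$, we have for every fixed $R > 0$,
\begin{equation*}
\lim_{n\to\infty}\sup_{y\in\mathbb{R}}\int_{y-R}^{y+R}\bigl(|f_n(x)|^2+|g_n(x)|^2\bigr)\,dx = 0.
\end{equation*}
In particular, each of $\{f_n\}$ and $\{g_n\}$ satisfies the hypothesis \eqref{vanishhypo} of Lemma~\ref{Lionsvanish} (both are bounded in $H^1$ by Lemma~\ref{Ibounded}).

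Next, I would apply Lemma~\ref{Lionsvanish} with $k = p$ to conclude $|f_n|_p \to 0$, with $k = r$ to get $|g_n|_r \to 0$, and with $k = 2q$ to get $|f_n|_{2q} \to 0$ and $|g_n|_{2q} \to 0$ (all exponents exceed $2$ by hypothesis). The mixed term is then handled by the bound
\begin{equation*}
|f_n g_n|_q^q \le \tfrac12\bigl(|f_n|_{2q}^{2q}+|g_n|_{2q}^{2q}\bigr) \longrightarrow 0,
\end{equation*}
already used in \eqref{mixtermbound}. Consequently all three nonlinear terms in $H(f_n,g_n)$ vanish in the limit.

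Finally, since $\|f_{nx}\|^2 + \|g_{nx}\|^2 \ge 0$, this yields
\begin{equation*}
\Theta(s,t) = \lim_{n\to\infty} H(f_n,g_n) = \lim_{n\to\infty}\bigl(\|f_{nx}\|^2+\|g_{nx}\|^2\bigr) \ge 0,
\end{equation*}
contradicting the strict inequality $\Theta(s,t) < 0$ from Lemma~\ref{Ibounded}. Hence $\gamma > 0$. The main (minor) obstacle is just verifying the hypothesis of Lemma~\ref{Lionsvanish} holds for each component separately, which follows from $|f_n|^2 \le |f_n|^2 + |g_n|^2$; after that the argument is routine.
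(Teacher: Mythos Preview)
Your proof is correct and follows essentially the same approach as the paper: assume $\gamma=0$, invoke Lemma~\ref{Lionsvanish} (using the $H^1$-boundedness from Lemma~\ref{Ibounded}) to kill all nonlinear terms, and then contradict $\Theta(s,t)<0$. The only cosmetic difference is that the paper bounds the mixed term via H\"older, $\int |f_n|^q|g_n|^q\,dx \le |f_n|_{2q}^q|g_n|_{2q}^q$, rather than the AM--GM bound you use, and writes $\ge \liminf$ for the kinetic term where you (justifiably) write an equality.
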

\begin{proof}
Suppose to contrary that $\gamma =0$.
By Lemma~\ref{Ibounded}, both $\{|f_n| \}$ and $\{|g_n| \}$ are bounded
sequences in $H^1$. Using Lemma
\eqref{Lionsvanish}, for every $k
> 2$, $f_n$ and $g_n$ converge to 0 in $L^k$ norm. In particular,
$|f_n|_{p}^{p} \to 0$ and
$|g_n|_{r}^{r} \to 0.$
 Since
\begin{equation*}
\int_{-\infty }^{\infty }|f_n|^{q}|g_n|^{q}\ dx \leq
|f_n|_{2q}^{q}|g_n|_{2q}^{q},
\end{equation*}
it follows also that
\begin{equation*}
\lim_{n \to \infty} \int_{-\infty }^{\infty }|f_n|^{q}|g_n|^{q}\ dx =0.
\end{equation*}
Hence
\begin{equation}
\Theta(s,t)=\lim_{n\to \infty }H(f_n,g_n) \ge \liminf_{n \to
\infty}\int_{-\infty }^{\infty }\left( |f_{nx}|^2+|g_{nx}|^2\right)
dx\geq 0,
\label{istgezero}
\end{equation}
contradicting Lemma~\ref{Ibounded}. This proves $\gamma > 0.$
\end{proof}

\begin{lem} \label{DIC} There exist $s_1 \in [0,s]$ and $t_1 \in
[0,t]$ such that
\begin{equation}
\label{SUM} \gamma = s_1+t_1
\end{equation}
and
\begin{equation} \label{REV}
\Theta(s_{1},t_{1})+\Theta(s-s_{1},t-t_{1})\leq \Theta(s,t).
\end{equation}
\end{lem}
\begin{proof}
Let $\epsilon$ be an arbitrary positive number. From the definition of $\gamma,$ it follows
that for $\zeta$ sufficiently large, we have $\gamma-\epsilon<M(\zeta)\leq M(2\zeta) \leq \gamma.$ By taking $\zeta$ larger if necessary, we may also assume that $\frac{1}{\zeta}<\epsilon.$
From the definition of $M,$ we can choose $N$ so large that, for every $n\geq N,$
\begin{equation*}
\gamma-\epsilon < M_{n}(\zeta) \leq M_{n}(2\zeta) \leq \gamma +\epsilon.
\end{equation*}
Hence, for each $n\geq N,$ we can find $y_n$ such that
\begin{equation}\label{sub113}
\int_{y_{n-\zeta}}^{y_{n}+\zeta}(|f_{n}|
^{2}+|g_{n}|^{2})\ dx > \gamma -\epsilon \ \text{and} \ %
\int_{y_{n-2\zeta}}^{y_{n}+2\zeta}(|f_{n}|
^{2}+|g_{n}|^{2})\ dx < \gamma +\epsilon.
\end{equation}
Now choose smooth functions $\rho$ and
$\sigma$ on $\mathbb R$ such that $\rho^2 + \sigma^2 = 1$ on $\mathbb R$, and $\rho$ is identically 1
on $[-1,1]$ and has support in $[-2,2]$. Set, for $\zeta > 0,$
\begin{equation*}
\rho_\zeta(x)=\rho(x/\zeta)\ \text{and} \ \sigma_\zeta(x)=\sigma(x/\zeta).
\end{equation*}
From the definition of $\gamma,$ it follows that for given $\epsilon >0$, there exist $\zeta > 0$
and a sequence $y_n$ such that, after passing to a subsequence, the functions defined by
\begin{equation*}
(f_n^{(1)}(x), g_n^{(1)}(x))=\rho_\zeta(x-y_n)(f_n(x),g_n(x))
\end{equation*}
and
\begin{equation*}
(f_n^{(2)}(x), g_n^{(2)}(x))=\sigma_\zeta(x-y_n)(f_n(x),g_n(x))
\end{equation*}
satisfy
\begin{equation*}
\|f_n^{(1)}\|^2 \to s_1, \ \|g_n^{(1)}\|^2 \to t_1, \  \|f_n^{(2)}\|^2 \to s-s_1, \ \text{and}\ \|g_n^{(2)}\|^2 \to t-t_1,
\end{equation*}
as $n \to \infty$. Now
\begin{equation*}
s_{1}+t_{1}=\lim_{n\to \infty }\int_{-\infty
}^{\infty }(|f_{n}^{(1)}|^{2}+|
g_{n}^{(1)}|^{2})\ dx=\lim_{n\to \infty
}\int_{-\infty }^{\infty }\rho _{\zeta }(|
f_{n}|^{2}+|g_{n}|^{2})\ dx.
\end{equation*}
From \eqref{sub113}, it follows that, for every $n\in \mathbb{N},$
\begin{equation*}
\gamma -\epsilon <\int_{-\infty }^{\infty }\rho _{\zeta
}(|f_{n}|^{2}+|g_{n}|^{2})\
dx<\gamma +\epsilon .
\end{equation*}
Hence $|(s_{1}+t_{1})-\gamma| <\epsilon .$
We claim that for all $n,$
\begin{equation} \label{e12ineq}
H(f_n^{(1)},g_n^{(1)})+H(f_n^{(2)},g_n^{(2)}) \le H(f_n,g_n) + C\epsilon
\end{equation}
To see \eqref{e12ineq}, we write
\begin{equation*}
\begin{aligned}
&H(f_n^{(1)},g_n^{(1)})= \int_{-\infty }^{\infty }\rho _{\zeta }^{2}\left(
(|f_{nx}|^{2}+|g_{nx}|^{2})-(a|f_{n}|^{p}+b|g_{n}|^{r}+c|f_{n}g_{n}|^{q})\right)dx\\
&\ \ +\int_{-\infty }^{\infty }\left(a(\rho _{\zeta
}^{2}-\rho _{\zeta }^{p})|f_{n}|^{p}+b(\rho _{\zeta
}^{2}-\rho _{\zeta }^{r})|g_{n}|^{r}+c(\rho _{\zeta
}^{2}-\rho _{\zeta }^{2q})|f_{n}|^{q}|g_{n}|^{q}\right)\ dx\\
&\ \  +\int_{-\infty }^{\infty }\left(\left( \rho _{\zeta }^{\prime
}\right) ^{2}\left( |f_{n}|^{2}+|g_{n}|^{2}\right) +2\rho _{\zeta }^{\prime
}\rho _{\zeta }\left(\text{Re}(f_{n}(\bar{f_{n}})_{x}+\text{Re}(g_{n}(\bar{g_{n}})_{x}\right)\right) \ dx.
\end{aligned}
\end{equation*}
and observe that the last two integrals on the right hand side can be made
arbitrarily uniformly small by taking $\zeta$ sufficiently large.
Similarly, we can estimate for $H(f_n^{(2)},g_n^{(2)}).$
Then, \eqref{e12ineq} follows by adding these two estimates, because $\rho_{\zeta}^2 + \sigma_{\zeta}^2 = 1.$

Now, if $s_{1},t_{1},s-s_{1},$ and $t-t_{1}$ are all positive, then the
claim follows by re-scaling $f_{n}^{(i)}$ and $g_{n}^{(i)}\ (i=1,2).$ Indeed,
let
\begin{equation*}
\alpha _{n}=\frac{\sqrt{s_{1}}}{\| f_{n}^{(1)}\| },\ \beta
_{n}=\frac{\sqrt{t_{1}}}{\| g_{n}^{(1)}\| },\ \gamma _{n}=%
\frac{\sqrt{s-s_{1}}}{\| f_{n}^{(2)}\| },\ \theta _{n}=%
\frac{\sqrt{t-t_{1}}}{\| g_{n}^{(2)}\| },
\end{equation*}
which gives
\begin{equation*}
(\alpha _{n}f_{n}^{(1)},\beta_{n}g_{n}^{(1)})\in \Sigma_{s_1,t_1}\ \text{and} \
(\gamma_{n}f_{n}^{(2)},\theta_{n}g_{n}^{(2)}) \in \Sigma_{s-s_1,t-t_1}.
\end{equation*}
As all the scaling factors tend to $1$ as $n\to \infty ,$
\begin{equation*}
\liminf_{n\to \infty
}\left(H(f_{n}^{(1)},g_{n}^{(1)})+H(f_{n}^{(2)},g_{n}^{(2)})\right)\geq
\Theta(s_{1},t_{1})+\Theta(s-s_{1},t-t_{1}).
\end{equation*}
If $s_1=0$ and $t_1>0,$ then we have
\begin{equation*}
\begin{aligned}
\lim_{n \to \infty}H(f_n^{(1)},g_n^{(1)})&=\lim_{n \to
\infty}\int_{-\infty}^\infty\left(|(f_{n}^{(1)})_{x}|^{2}+|(g_{n}^{(1)})_{x}|^{2}-b|g_{n}^{(1)}|^{r}
\right)\ dx\\
&\ge \liminf_{n\to \infty}\int_{-\infty}^\infty \left(|(g_{n}^{(1)})_{x}|^{2}-b|g_{n}^{(1)}|^{r}\right)
\ dx \ge \Theta(0,t_1).
\end{aligned}
\end{equation*}
Similar estimates hold if $t_1$, $s-s_1$, or $t-t_1$ are zero. Thus, in all the cases we have that the limit inferior as $n\to \infty$ of the left hand side of \eqref{e12ineq} $\geq \Theta(s_{1},t_{1})+\Theta(s-s_{1},t-t_{1}).$ Consequently,
\begin{equation*}
\Theta(s_{1},t_{1})+\Theta(s-s_{1},t-t_{1})\leq \Theta(s,t) + C\epsilon,
\end{equation*}
which proves the lemma, as $\epsilon$ is arbitrary.

\end{proof}

The following lemma rules out the possibility of dichotomy of minimizing sequences:
\begin{lem}\label{GAdicho}
For every minimizing sequence, one has $\gamma \not\in (0,s+t).$
\end{lem}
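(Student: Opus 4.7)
The plan is a short contradiction argument that combines Lemma~\ref{DIC} with the strict subadditivity of Lemma~\ref{subadd}. Suppose, for contradiction, that some minimizing sequence $\{(f_n,g_n)\}$ has $\gamma\in(0,s+t)$. By Lemma~\ref{DIC} there exist $s_1\in[0,s]$ and $t_1\in[0,t]$ satisfying
\begin{equation*}
\gamma=s_1+t_1\quad\text{and}\quad \Theta(s_1,t_1)+\Theta(s-s_1,t-t_1)\le \Theta(s,t).
\end{equation*}

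Next I would verify that the pair of mass splittings $(s_1,t_1)$ and $(s_2,t_2):=(s-s_1,t-t_1)$ meets the hypotheses of Lemma~\ref{subadd}. Since $s,t>0$, automatically $s_1+s_2=s>0$ and $t_1+t_2=t>0$. The assumption $\gamma>0$ gives $s_1+t_1>0$, while $\gamma<s+t$ gives $s_2+t_2=(s+t)-\gamma>0$. All four positivity conditions in Lemma~\ref{subadd} are therefore satisfied, so we obtain the strict subadditivity
\begin{equation*}
\Theta(s,t)=\Theta(s_1+s_2,t_1+t_2)<\Theta(s_1,t_1)+\Theta(s_2,t_2),
\end{equation*}
which directly contradicts the inequality produced by Lemma~\ref{DIC}. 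Hence dichotomy cannot occur and $\gamma\notin(0,s+t)$.

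Since no nontrivial estimation is needed beyond invoking the two previous lemmas, there is essentially no obstacle here; the only point that requires care is checking the positivity conditions of Lemma~\ref{subadd}, which crucially use the strict bounds $0<\gamma<s+t$ to ensure that both halves of the split are admissible test parameters.
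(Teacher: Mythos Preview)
Your proposal is correct and matches the paper's own proof essentially line for line: assume $0<\gamma<s+t$, invoke Lemma~\ref{DIC} to obtain $s_1,t_1$ with $\gamma=s_1+t_1$ and the reverse inequality \eqref{REV}, verify the four positivity hypotheses of Lemma~\ref{subadd} using $s,t>0$ and $0<\gamma<s+t$, and derive a contradiction from the strict subadditivity \eqref{SUBA}. There is nothing to add or correct.
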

\begin{proof}
Suppose to the contrary that $\gamma$ satisfies $0<\gamma < s + t$.   Let $s_1$ and
$t_1$ be as in Lemma~\ref{DIC}, and let $s_2=s-s_1$ and
$t_2=t-t_1$. Then $s_{2}+t_{2}=(s+t)-\gamma >0$, and also
$s_{1}+t_{1}=\gamma >0$.
Furthermore, $s_{1}+s_{2}=s>0$ and $t_{1}+t_{2}=t>0$. Therefore
Lemma~\ref{subadd} implies that that \eqref{SUBA} holds. But this
contradicts \eqref{REV} and thus, lemma follows.
\end{proof}

Thus all the preliminaries for the proofs of Theorem~\ref{existence} have been established.
We are now able to prove statements (a)-(d) of Theorem~\ref{existence}.
\begin{proof}[Proof of Theorem \ref{existence}]
 From Lemmas~\ref{GAvanish} and \ref{GAdicho}, it follows that every minimizing sequence must be compact, i.e., $\gamma=s+t.$ Then the
 statement (a) of the Theorem \ref{existence} follows from Lemma~\ref{GAst}.

 \smallskip

To see the validity of statement (b), notice that $(\Phi,\Psi)$ is in the minimizing set $\mathcal F_{s,t}$
for $\Theta(s,t)$, and so minimizes $H(u,v)$ subject to $Q(u)$ and
$Q(v)$ being held constant, the Lagrange multiplier principle
asserts that
there exist real numbers $\omega_{1}$ and $\omega_{2}$ such that
\begin{equation}
\delta H(\Phi,\Psi) + \omega_{1} \delta Q(\Phi)+\omega_{2} \delta Q(\Psi)=0,
\end{equation}
where $\delta$ denotes the Fr\'echet derivative.  Computing the associated
Fr\'echet derivatives we see that the equations
\begin{equation}\label{ODE1}
\left\{
\begin{aligned}
-\Phi ^{\prime \prime }+\omega_{1}\Phi &=\alpha|\Phi|^{p-2}\Phi+\tau|\Psi|^{q}|\Phi|^{q-2}\Phi, \\
-\Psi ^{\prime \prime }+\omega_{2}\Psi &=\beta|\Psi|^{r-2}\Psi+\tau|\Phi|^{q}|\Psi|^{q-2}\Psi,
\end{aligned}
\right.
\end{equation}
hold, at least in the sense of distributions. A straightforward bootstrapping argument (cf.\ Lemma 1.3 of \cite{[T]}) shows that
distributional
solutions are also classical solutions.

\smallskip

Multiplying the first equation in \eqref{ODE1} by $\bar{\Phi }$ and the second
equation by $\bar{\Psi },$ and integrating over $\mathbb{R},$ we obtain
\begin{equation}\label{ODE2}
\begin{aligned}
\int_{-\infty }^{\infty }\left( |\Phi ^{\prime
}|^{2}-\alpha|\Phi|^{p}-\tau|\Phi
|^{q}|\Psi|^{q}\right) \ dx=-\omega
_{1}\int_{-\infty }^{\infty }|\Phi|^{2}\
dx=-\omega _{1}s, \\
\int_{-\infty }^{\infty }\left( |\Psi ^{\prime
}|^{2}-\beta|\Psi|^{r}-\tau|\Phi
|^{q}|\Psi|^{q}\right) \ dx=-\omega
_{2}\int_{-\infty }^{\infty }|\Psi|^{2}\
dx=-\omega _{2}t.
\end{aligned}
\end{equation}
From Lemma~\ref{negative}, applied to $(f_{n},g_{n})=(\Phi,\Psi),$ we have that $\omega_{1}, \omega_{2}>0.$
This
proves assertion (b) of Theorem~\ref{existence}.

\smallskip

We now prove statement (c) of Theorem~\ref{existence}. We write
\begin{equation*}
\Phi (x)=e^{i\theta _{1}(x)}\left\vert \Phi (x)\right\vert \ \text{ and }\ \Psi
(x)=e^{i\theta _{2}(x)}\left\vert \Psi (x)\right\vert ,
\end{equation*}
where $\theta _{1},\theta _{2}:\mathbb{R}\to \mathbb{R}.$ Define $\tilde{\phi }(x)=\left\vert
\Phi (x)\right\vert $ and $\tilde{\psi }(x)=\left\vert \Psi
(x)\right\vert .$ Note that $(\tilde{\phi },\tilde{\psi })$ is also
in $\mathcal F_{s,t}$, as follows from Lemma~\ref{posmin}. Therefore, $(\tilde{\phi },\tilde{\psi })$
satisfies the Lagrange multiplier equations
\begin{equation}\label{ODE4}
\left\{
\begin{aligned}
-\tilde{\phi} ^{\prime \prime }+\omega_{1}\tilde{\phi} &=\alpha|\tilde{\phi}|^{p-2}\tilde{\phi}+\tau|\tilde{\psi}|^{q}|\tilde{\phi}|^{q-2}\tilde{\phi}, \\
-\tilde{\psi} ^{\prime \prime }+\omega_{2}\tilde{\psi} &=\beta|\tilde{\psi}|^{r-2}\tilde{\psi}+\tau|\tilde{\phi}|^{q}|\tilde{\psi}|^{q-2}\tilde{\psi},
\end{aligned}
\right.
\end{equation}
(The Lagrange multipliers are
determined by the equation \eqref{ODE2} and this equation stay same when $(\Phi ,\Psi
)$ is replaced by $(\tilde{\phi },\tilde{\psi }),$ and hence the Lagrange multipliers are unchanged.)
We compute
\begin{equation}\label{pos1}
\Phi ^{\prime \prime }=e^{i\theta _{1}}\left( \omega _{1}\tilde{\phi }%
-\alpha|\tilde{\phi }| ^{p-2}\tilde{\phi }-\tau|
\tilde{\psi}|^{q}|\tilde{\phi}|^{q-2}\tilde{\phi }-(\theta _{1}^{\prime })^{2}\tilde{%
\phi }+2i\theta _{1}^{\prime }\tilde{\phi }^{\prime }+i\theta
_{1}^{\prime \prime }\tilde{\phi }\right) .
\end{equation}
On the other hand, from the first equation of \eqref{ODE1}, we have that
\begin{equation}\label{pos2}
\Phi ^{\prime \prime }=e^{i\theta _{1}}\left( \omega _{1}\tilde{\phi }%
-\alpha|\tilde{\phi }|^{p-2}\tilde{\phi }-\tau|
\tilde{\psi}|^{q}|\tilde{\phi}|^{q-2}\tilde{\phi }\right) .
\end{equation}

From \eqref{pos1} and \eqref{pos2} , we obtain
\begin{equation*}
(\theta _{1}^{\prime }(x))^{2}\tilde{\phi }(x)-2i\theta _{1}^{\prime
}(x)\tilde{\phi }^{\prime }(x)-i\theta_{1}^{\prime \prime}(x)%
\tilde{\phi }(x)=0.
\end{equation*}
Equating the real part of the last equation, we conclude that $\theta
_{1}^{\prime }(x)=0,$ and hence $\theta _{1}(x)$ is constant. Similarly, $%
\theta _{1}(x)$ is constant.

\smallskip

Next, for any $\xi>0,$ define the function $K_\xi(x)$ by
\begin{equation*}
K_{\xi}(x)=\frac{1}{2\sqrt{\xi}}e^{-\sqrt{\xi} |x|}.
\end{equation*}
A calculation using Fourier transform shows that the operators $\omega_1-\partial_{xx}$ and $\omega_2-\partial_{xx}$ appearing
in \eqref{ODE4} are invertible on $H^1,$ with inverse given by convolution with
the functions $K_{\omega_1}$ and $K_{\omega_2}$ respectively.
Then, the Lagrange multiplier equations associated with $(\tilde{\phi}, \tilde{\psi})$ can be written as
\begin{equation*}
\label{convfortildephi} \tilde\phi=K_{\omega_1}
\star\left(\alpha|\tilde{\phi}|^{p-2}\tilde{\phi}+\tau|\tilde{\psi}|^{q}|\tilde{\phi}|^{q-2}\tilde{\phi}\right),\
\tilde\psi=K_{\omega_2}
\star\left(\beta|\tilde{\psi}|^{r-2}\tilde{\psi}+\tau|\tilde{\phi}|^{q}|\tilde{\psi}|^{q-2}\tilde{\psi}\right)
\end{equation*}
Since the convolutions of $K_{\omega_1}$ and $K_{\omega_2}$ with functions that are
 everywhere non-negative and not
identically zero must produce everywhere positive functions, it
follows that $\tilde \phi(x)>0$ and $\tilde \psi(x)>0$ for
all $x \in \mathbb R$. This completes proof of statement Theorem~\ref{existence}(c).

\smallskip

It remains to prove part (d) of Theorem~\ref{existence}.
Suppose to the contrary that $\mathcal{F}_{s,t}$ is unstable. Then there exist a
number $\epsilon >0,$ a sequence of times ${t_{n}},$ and a sequence
${(u_n(x,0),v_n(x,0))}$ in $Y$ such that for all $n,$
\begin{equation}
\label{idconvtoF} \inf\{\|(u_n(x,0),v_n(x,0))-(\Phi,\Psi)\|_Y : (\Phi,\Psi)\in
\mathcal{F}_{s,t}\}<\frac{1}{n};
\end{equation}
and
\begin{equation}\label{contra}
\inf\{\|(u_n(\cdot,t_n),v_n(\cdot,t_n)-(\Phi,\Psi)\|_Y : (\Phi,\Psi)\in
\mathcal{F}_{s,t}\} \geq \epsilon,
\end{equation}
where $(u_n(x,t),v_n(x,t))$ solves \eqref{CNLS} with initial data $(u_n(x,0),v_n(x,0)).$
 From \eqref{idconvtoF} and the continuity of the functionals $H$ and $Q,$
we have
\begin{equation} \label{initdata}
\begin{aligned}
\lim_{n\to \infty }H(u_n(x,0),v_n(x,0))&= \Theta(s,t),\\
\lim_{n\to \infty }Q(u_n(x,0)) &=s,\\
\lim_{n\to \infty }Q(v_n(x,0))&=t.
\end{aligned}
\end{equation}
Denote $R_n=u_n(\cdot,t_n)$ and $S_n=v_n(\cdot,t_n).$
Since $H(u,v)$ and $Q(u)$ are conserved quantities,
then \eqref{initdata} implies
\begin{equation*}
\begin{aligned}
\lim_{n \to \infty }H(R_n,S_n)&=\Theta(s,t),\\
\lim_{n \to \infty }Q(R_n)&=s,\\
\lim_{n\to \infty }Q(S_n)&=t.
\end{aligned}
\end{equation*}
Therefore $\{(R_{n},S_{n})\}$ is a minimizing sequence for $\Theta(s,t).$
 Now, by the first part of Theorem \ref{existence}, there
exists a subsequence $\{(R_{n_k},S_{n_k})\}$,
$\{y_k\}\subset \mathbb{R}$, and a pair $(\Phi,\Psi) \in
\mathcal{F}_{s,t}$ such that
\begin{equation}
\lim_{k \to \infty}
\|(R_{n_k}(\cdot+y_k),S_{n_k}(\cdot+y_k))-(\Phi,\Psi)\|_Y=0.
\end{equation}
Then, for some sufficiently large $k$,
\begin{equation*}
\|(R_{n_k}(\cdot+y_k),S_{n_k}(\cdot+y_k))-(\Phi,\Psi)\|_Y<\epsilon,
\end{equation*}
and hence
\begin{equation}
\label{penult}
\|(R_{n_k},S_{n_k})-(\Phi(\cdot-y_k),\Psi(\cdot-y_k))\|_Y<\epsilon.
\end{equation}
Since $\mathcal{F}_{s,t}$ is invariant under translations, $(\Phi(\cdot-y_k),\Psi(\cdot-y_k))$ belongs to
$\mathcal{F}_{s,t}$, contradicting
\eqref{contra}, and hence the minimizing set $\mathcal
F_{s,t}$ must be stable.
\end{proof}

\end{document}